\newtheorem{theorem}{Theorem}[section]
\newtheorem{corollary}[theorem]{Corollary}
\newtheorem{definition}[theorem]{Definition}
\newtheorem{lemma}[theorem]{Lemma}
\newtheorem{proposition}[theorem]{Proposition}
\newtheorem{remark}[theorem]{Remark}
\newtheorem{example}[theorem]{Example}
\newenvironment{proof}{\begin{trivlist}\item[]{\it Proof.}}
{\hfill$\square$\end{trivlist}}
\author{M. Zubor}
\title{Semilattice Indecomposable Finite Semigroups With Large Subsemilattices\footnote{This work was supported by the National Research, Development and Innovation Office – NKFIH, 115288. MSC: 20M10. Keywords: semigroups, semilattices, semigroup algebra}}
\date{ }
\begin{document}

\maketitle

\begin{center}
{\small Department of Algebra, Budapest University of Technology and Economics }

{\small 1521 Budapest, P. O. Box 91, Hungary}

{\small E-mail: zuborm@math.bme.hu  } 
\end{center}

\bigskip

\begin{abstract}
In this paper we show that if $Y$ is a subsemilattice of a finite semilattice indecomposable semigroup $S$ then $|Y|\leq 2\left\lfloor \frac{|S|-1}{4}\right\rfloor+1$. We also characterize finite semilattice indecomposable semigroups $S$ which contains a subsemilattice $Y$ with $|S|=4k+1$ and $|Y|=2\left\lfloor \frac{|S|-1}{4}\right\rfloor+1=2k+1$. They are special inverse semigroups. Our investigation is based on our new result proved in this paper which characterize finite semilattice indecomposable semigroups with a zero by only use the properties of its semigroup algebra.
\end{abstract}

\section{Introduction and motivation}
It is known (\cite{Tamura2}) that every semigroup is a semilattice of semilattice indecomposable (s-in\-de\-com\-po\-sab\-le) semigroups. In the literature of the theory of semigroups there are many papers about s-indecomposable semigroups (see, for example, the papers, \cite{Chrislock}, \cite{Nordahl} - \cite{Nagy5},
\cite{PutchaWeissglass}, \cite{Tamura1}, \cite{TamuraKimura}, and the books \cite{Grillet}, \cite{Nagy6}).
Some of them deal with the s-indecomposable semigroups without idempotents, the others investigate the s-indecomposable semigroups containing at least one idempotent. In this paper we deal with finite s-indecomposable semigroups in terms of what can be said about the size of their subsemilattices. The answer is known in special classes of semigroups. 
In the classes of semigroups investigated in \cite{Chrislock}, \cite{Nordahl} - \cite{Nagy5}, \cite{TamuraKimura}, the finite s-indecomposable semigroups are ideal extensions of special completely simple semigroups by nilpotent semigroups. As $ef=fe$ implies $e=f$ for every idempotent elements $e$ and $f$ of a completely simple semigroup, the cardinality of the subsemilattices in a finite s-indecomposable completely simple semigroup is one.

The situation is more interesting in general. In our present paper we show that if $Y$ is a subsemilattice of a finite s-indecomposable semigroup $S$ then $|Y|\leq 2\left\lfloor \frac{|S|-1}{4}\right\rfloor+1$. We also show that there are finite s-indecomposable semigroups $S$ which contain a subsemilattice $Y$ such that $|Y|=2\left\lfloor \frac{|S|-1}{4}\right\rfloor+1$. Moreover, these semigroups are characterized here, when $|S|=4k+1$.

\section{Preliminaries}

Let $S$ be a semigroup. Let ${\mathbb C}[S]$ denote the semigroup algebra of $S$ over the field $\mathbb C$ of all complex numbers. The contracted semigroup algebra of a semigroup $S$ with a zero (over $\mathbb{C}$) will be denoted by ${\mathbb C}_0[S]$ (see \cite[p.35]{Okninski}).

For a finite dimensional algebra $A$ over $\mathbb{C}$, the Jacobson radical of $A$ will be denoted by $J(A)$. 
It is known that $J(A)$ is the set of all properly nilpotent elements of $A$. We will use the following well-known facts: the factor algebra $A/J(A)$ is semisimple (and so, for a finite semigroup $S$, $\mathbb{C}[S]/J(\mathbb{C}[S])$ is semisimple), moreover a finite dimensional algebra $A$ over $\mathbb{C}$ is semisimple if and only if $A$ is isomorphic to $\bigoplus_{i=1}^k M_{n_i}(\mathbb{C})$, where $M_n(\mathbb{C})$ denotes the associative algebra of all $n\times n$ matrices over $\mathbb{C}$.

If a semigroup $S$ has a minimal ideal $K_S$, then $K_S$ is called the kernel of $S$. Every finite semigroup evidently has a kernel. If a semigroup $S$ has a kernel, then $K_S$ is a simple subsemigroup of $S$ \cite[Cor. 2.30. p.69]{CliffordPreston}. Every finite simple [$0$-simple] semigroup is completely simple [completely $0$-simple] by \cite[Cor. 2.56. p.83]{CliffordPreston}.

For Rees matrix semigroups we will use the notation of \cite[p.88]{CliffordPreston}. Rees Theorem \cite[Thm. 3.5. p.94]{CliffordPreston} characterizes the completely simple [completely $0$-simple] semigroups. A semigroup is completely $0$-simple if and only if it is isomorphic with a regular Rees matrix semigroup over a group with a zero.
A completely $0$-simple semigroup is an inverse semigroup if and only if it is a Brandt semigroup \cite[Thm. 3.9. p.102]{CliffordPreston}. 
In our investigation a special type of Brandt semigroups is in the focus. This is the semigroup $\mathcal{M}^0(1;2,2;I)$ where $1$ denote the one-element group and $I$ is the $2\times 2$ identity matrix. We will denote this Brandt semigroup by $B_2$.

\section{Semilattice indecomposable semigroups} 

A semigroup $S$ is said to be 
{\it semilattice indecomposable} (s-indecomposable) if every semilattice homomorphic image of $S$ is trivial (that is, it contains only one element). An ideal $I$ of a semigroup $S$ is called a {\it completely prime ideal} if $S\setminus I$ is a subsemigroup of $S$. It is known (\cite[I.8.3. Prop. p.15]{Petrich}) that a semigroup is s-indecomposable if and only if it does not contain completely prime ideals. Corollary in \cite{Tamura2} gives an other characterization of s-indecomposable semigroups. A semigroup $S$ is s-indecomposable if and only if, for every $a, b\in S$, there is a sequence $a=a_0, a_1, \dots , a_{n-1}, a_n=b$ of elements of $S$ such that $a_{i-1}$ divides some power of $a_i$ ($i=1, \dots , n$).

In Theorem \ref{tetel} we give a new characterization of finite s-indecomposable semigroups $S$ by the terms of semigroup algebras ${\mathbb C}[S/K_S]$. In our investigation we shall use the next lemma, which is a special case of Theorem 4.1 of \cite{Coleman}.

\begin{lemma}\label{direkt}
If $Y$ is a finite semilattice then the algebra ${\mathbb C}[Y]$ is semisimple and isomorphic to the direct sum $\bigoplus _{i\in Y}{\mathbb C}$.
\end{lemma}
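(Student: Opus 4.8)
The plan is to exhibit an explicit isomorphism from ${\mathbb C}[Y]$ onto $\bigoplus_{y\in Y}{\mathbb C}$ built from the multiplicative characters of $Y$, and to read off semisimplicity from the shape of the target. Throughout I regard $Y$ as a meet-semilattice: since every element of $Y$ is idempotent and $Y$ is commutative, the relation $x\le y\iff xy=x$ is a partial order, and the product $xy$ coincides with the meet $x\wedge y$ (the greatest lower bound). The characters (algebra homomorphisms into ${\mathbb C}$) of such an algebra are easy to describe, and I would use $|Y|$ of them to build the map.

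For each $y\in Y$ define $\phi_y\colon Y\to\{0,1\}\subseteq{\mathbb C}$ by $\phi_y(x)=1$ if $x\ge y$ and $\phi_y(x)=0$ otherwise; that is, $\phi_y$ is the indicator of the principal filter of $y$. I would first check that $\phi_y$ is a semigroup homomorphism into the multiplicative monoid of ${\mathbb C}$: this is exactly the meet-semilattice identity $x\wedge z\ge y\iff(x\ge y\text{ and }z\ge y)$, which yields $\phi_y(xz)=\phi_y(x)\phi_y(z)$ since all values lie in $\{0,1\}$. Extending linearly, each $\phi_y$ becomes an algebra homomorphism ${\mathbb C}[Y]\to{\mathbb C}$, and the $|Y|$ maps $\phi_y$ are pairwise distinct by antisymmetry of $\le$ (if $\phi_y=\phi_z$ then $y\ge z$ and $z\ge y$, so $y=z$).

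Assembling these gives an algebra homomorphism $\Phi\colon{\mathbb C}[Y]\to\bigoplus_{y\in Y}{\mathbb C}$, $\Phi(a)=(\phi_y(a))_{y\in Y}$. Since both sides have dimension $|Y|$, it suffices to prove $\Phi$ injective. On the basis this amounts to invertibility of the matrix $M=(\phi_y(x))_{y,x\in Y}$, whose $(y,x)$ entry is $1$ precisely when $x\ge y$; listing the elements of $Y$ along a linear extension of $\le$ makes $M$ triangular with $1$'s on the diagonal, hence invertible (equivalently, one solves the relations $\sum_{x\ge y}c_x=0$ by Möbius inversion on the poset to force every $c_x=0$). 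Thus $\Phi$ is an isomorphism, ${\mathbb C}[Y]\cong\bigoplus_{y\in Y}{\mathbb C}$, and semisimplicity is immediate since a finite direct sum of copies of ${\mathbb C}$ is semisimple. The only genuine step is the homomorphism check for $\phi_y$, where the full meet-semilattice structure (not merely a poset) is used; everything else is a dimension count together with the standard triangularity of the incidence matrix.
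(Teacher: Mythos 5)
Your proof is correct, but it is worth noting that the paper does not actually prove this lemma at all: it is quoted as a special case of Theorem 4.1 of Coleman's paper \cite{Coleman}, so any comparison is between your self-contained argument and an appeal to the literature. Your route is the classical one: the maps $\phi_y$ (indicators of principal filters) are precisely the $|Y|$ distinct algebra characters of ${\mathbb C}[Y]$, the identity $xz\ge y\iff(x\ge y\text{ and }z\ge y)$ is exactly what makes each $\phi_y$ multiplicative, antisymmetry separates them, and triangularity of the incidence matrix $\bigl([x\ge y]\bigr)_{y,x}$ along a linear extension of the order gives injectivity, whence the isomorphism by dimension count. All of these steps check out, including the implicit facts that $x\le y\iff xy=x$ is a partial order and that the semigroup product is the meet. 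What your argument buys is explicitness and independence from outside references: Möbius inversion on the poset even produces the complete set of primitive orthogonal idempotents, so one sees concretely why ${\mathbb C}[Y]\cong\bigoplus_{i\in Y}{\mathbb C}$ rather than merely that it is semisimple. What the paper's citation buys is brevity and generality, since Coleman's theorem concerns a wider class of commutative semigroup algebras; but for the purposes of this paper your direct proof would serve equally well.
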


\begin{theorem}\label{tetel}
A finite semigroup $S$ is s-indecomposable if and only if, $\mathbb{C}[S/K_S]/J(\mathbb{C}[S/K_S])$ has exactly one $1$-dimensional ideal.
\end{theorem}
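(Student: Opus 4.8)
The plan is to connect the combinatorial characterization of s-indecomposability (no completely prime ideals, or equivalently the Tamura divisibility-sequence condition) with the structure of the factor algebra $\mathbb{C}[S/K_S]/J(\mathbb{C}[S/K_S])$, which by the semisimplicity fact is a direct sum $\bigoplus_{i=1}^k M_{n_i}(\mathbb{C})$. The $1$-dimensional ideals of this semisimple algebra correspond exactly to the summands with $n_i=1$, i.e. to the one-dimensional two-sided ideals, so the claim is really that $S$ is s-indecomposable precisely when exactly one such trivial matrix block appears. I would first reduce to $S/K_S$, which is a finite semigroup with zero (the image of $K_S$), and recall that $S$ is s-indecomposable if and only if $S/K_S$ is, since collapsing the kernel does not create or destroy completely prime ideals; this lets me work with a semigroup having a zero and use $\mathbb{C}_0$-type reasoning on the contracted algebra.

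The key bridge I expect to exploit is the following: the number of $1$-dimensional ideals of $\mathbb{C}[T]/J(\mathbb{C}[T])$ equals the number of one-dimensional irreducible (equivalently, linear) representations of $T$, and for a finite semigroup the linear characters are governed by the maximal semilattice quotient. Concretely, I would argue that the one-dimensional two-sided ideals of the semisimple algebra $\mathbb{C}[T]/J$ are in bijection with the semilattice of idempotents of the largest semilattice homomorphic image of $T$. If $T=S/K_S$ has a nontrivial semilattice image $Y$ with $|Y|\geq 2$, then by Lemma \ref{direkt} the algebra $\mathbb{C}[Y]\cong\bigoplus_{i\in Y}\mathbb{C}$ contributes $|Y|$ distinct $1$-dimensional ideals that survive in $\mathbb{C}[T]/J$, forcing at least two such ideals. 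Conversely, the zero element of $T$ always forces the presence of at least one $1$-dimensional ideal (the trivial representation sending everything to zero, or the augmentation-type character), so there is always at least one; s-indecomposability then corresponds exactly to the case where no further trivial blocks arise, i.e. exactly one.

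In carrying this out I would: (i) formalize the correspondence between $1$-dimensional ideals of the semisimple quotient and linear representations $T\to\mathbb{C}$ that are multiplicative on $T$; (ii) show that such a linear representation factors through the maximal semilattice image, using that the image of a multiplicative character lands in an idempotent-generated commutative quotient; and (iii) count these characters, showing the count is $1$ iff the maximal semilattice image of $T$ is trivial iff $T$ is s-indecomposable. The forward direction (s-indecomposable $\Rightarrow$ exactly one) uses that a trivial semilattice image leaves only one character; the reverse direction (exactly one $\Rightarrow$ s-indecomposable) is the contrapositive, where a nontrivial semilattice image of size $\geq 2$ produces $\geq 2$ characters via Lemma \ref{direkt}.

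The main obstacle I anticipate is step (ii): rigorously showing that every linear (one-dimensional) representation of the finite semigroup $T$ factors through its maximal semilattice quotient, and hence that the characters are counted precisely by that quotient's size, rather than by some larger set of multiplicative maps into $\mathbb{C}$. One must rule out characters taking nonzero values of modulus other than $0$ or $1$ in a way inconsistent with the semilattice structure, and handle the kernel $K_S$ carefully so that passing to $S/K_S$ neither loses nor gains characters. Establishing this factorization cleanly — likely by showing any multiplicative map $T\to\mathbb{C}$ has image an idempotent semilattice of complex numbers, which can only be $\{0\}\cup\{1\}$ under the multiplicative structure — is where the real content of the theorem lies.
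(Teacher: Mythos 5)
Your reduction to $T=S/K_S$ and your argument that a nontrivial semilattice image forces at least two $1$-dimensional ideals (pulling the characters of $\mathbb{C}[Y]\cong\bigoplus_{i\in Y}\mathbb{C}$ back through the semisimple quotient, where every ideal is a direct summand) are both sound and agree with the paper. The gap is in your key bridge for the other direction. The claim in your step (ii) --- that every one-dimensional representation of a finite semigroup factors through its maximal semilattice quotient, so that the number of $1$-dimensional ideals of $\mathbb{C}[T]/J(\mathbb{C}[T])$ equals the size of that quotient --- is false. Take $T=G^{0}$, a nontrivial finite abelian group $G$ with a zero adjoined (here $K_T=\{0\}$, so $T/K_T\cong T$). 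Its maximal semilattice image is the two-element semilattice, but $\mathbb{C}[G^{0}]/J\cong\mathbb{C}\oplus\mathbb{C}[G]$ has $1+|G|$ one-dimensional ideals: every character of $G$, extended by $\chi(0)=0$, is a multiplicative map $T\to\mathbb{C}$, and for $G=\mathbb{Z}/2$ the sign character takes the three values $1,-1,0$ on $T$, so it cannot factor through any two-element quotient. This also kills the fix you anticipate in your last paragraph: the image of a multiplicative map from a finite semigroup into $\mathbb{C}$ need not be contained in $\{0,1\}$, since finite subsemigroups of $(\mathbb{C},\cdot)$ may contain arbitrary roots of unity; no argument based purely on the multiplicative structure of $\mathbb{C}$ can force the image to be a semilattice.

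What is true, and what the paper actually proves, concerns supports rather than values. If $\chi$ is a nonzero multiplicative map $T\to\mathbb{C}$ other than the all-ones character (equivalently, the projection onto a $1$-dimensional ideal other than the one spanned by the zero $z$ modulo $J$), then $\chi(z)=0$, and the vanishing set $I=\{s\in T:\chi(s)=0\}$ is a nonempty proper completely prime ideal of $T$: it is an ideal because $\chi$ is multiplicative, and $T\setminus I$ is a subsemigroup because $\mathbb{C}$ has no zero divisors. Hence a second $1$-dimensional ideal yields a completely prime ideal (equivalently, a two-element semilattice image), contradicting s-indecomposability --- and this works even when $\chi$ takes root-of-unity values, as in the $G^{0}$ example, where $I=\{0\}$ is exactly the completely prime ideal witnessing that $G^{0}$ is not s-indecomposable. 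This vanishing-set argument is precisely the paper's construction of $I=\ker(\pi\circ\phi)\cap S$; substituting it for your step (ii) repairs the proof, but as written the forward direction of your proposal does not go through.
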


\begin{proof}
Let $S$ be a finite semigroup. First we consider
the case when $S$ has a zero $z$. In this case $K_S=\{z\}$ and so $S/K_S\cong S$ .

 Let  $\alpha$ be a semilattice congruence on $S$. There is a $\varphi:\mathbb{C}[S]\rightarrow\mathbb{C}[S/\alpha]$ surjective homomorphism. The algebra $\mathbb{C}[S/\alpha]$ is semisimple by Lemma \ref{direkt}, and so $J(\mathbb{C}[S])\subseteq \ker(\varphi)$. Thus there is a surjective homomorphism $\phi:\mathbb{C}[S]/J(\mathbb{C}[S])\rightarrow\mathbb{C}[S/\alpha]$. Since every ideal of $\mathbb{C}[S]/J(\mathbb{C}[S])\cong \bigoplus_{i=1}^kM_{n_i}(\mathbb{C})$ is a direct summand, then we have
$$\mathbb{C}[S]/J(\mathbb{C}[S])\cong \ker(\phi)\oplus\mathbb{C}[S/\alpha].$$
 By Lemma \ref{direkt} we get
 $$\mathbb{C}[S]/J(\mathbb{C}[S])\cong \ker(\phi)\oplus\underbrace{\mathbb{C}\oplus\dots\oplus\mathbb{C}}_{|S/\alpha| \text{ times}},$$
 from which we can conclude that if $S$ is not s-indecomposable then $\mathbb{C}[S/K_S]/J(\mathbb{C}[S/K_S])$ has more than one $1$-dimensional ideal.

In the next we show that if $S$ is s-indecomposable then the semigroup algebra $\mathbb{C}[S]/J(\mathbb{C}[S])$ has exactly one $1$-dimensional ideal. Let $S$ be a finite s-indecomposable semigroup (with a zero $z$). The factor algebra $\mathbb{C}[S]/J(\mathbb{C}[S])$ is semisimple in which $\mathbb{C}[z]+J(\mathbb{C}[S])$ is a $1$-dimensional ideal. 
To show that this is the only $1$-dimensional ideal of $\mathbb{C}[S]/J(\mathbb{C}[S])$, it is sufficient to show that $\mathbb{C}[S]/(J(\mathbb{C}[S])+\mathbb{C}[z])$ does not contain $1$-dimensional ideals.

Denote $A:=\mathbb{C}[S]/(J(\mathbb{C}[S])+\mathbb{C}[z])$. We will show $A\cong \mathbb{C}_0[S]/J(\mathbb{C}_0[S])$.
It is easy to see that $J(\mathbb{C}[S])\cap \mathbb{C}[z]=0$. We know that $\mathbb{C}[S]\cong \mathbb{C}_0[S]\oplus\mathbb{C}[z]$ (\cite[Cor. 9 p.38]{Okninski}).
So
$$A\cong (\mathbb{C}_0[S]\oplus \mathbb{C}[z])/(J(\mathbb{C}[S])\oplus\mathbb{C}[z])\cong \mathbb{C}_0[S]/J(\mathbb{C}_0[S]).$$

Since $A$ is semisimple, we get 
$$A\cong \bigoplus_{i=1}^kM_{n_i}(\mathbb{C}).$$
Suppose indirectly that $n_j=1$ for some $j$ with $1\leq j\leq k$. Denote the composition of canonical homomorphisms $\mathbb{C}[S]\rightarrow \mathbb{C}_0[S]$ and $\mathbb{C}_0[S]\rightarrow A$ by $\phi$. Let $\pi:A\rightarrow M_{n_j}(\mathbb{C})\cong \mathbb{C}$ be the canonical projection. 
Let 
$$I:=\ker(\pi\circ\phi)\cap S=\{s\in S|\pi(\phi(s))=0\}.$$

It is easy to see that $\phi(S)$ generates $A$ but $\phi(I)$ does not. Hence $S\ne I$. $\phi(z)=0$ so $z\in I$ which means $I$ is a nonempty proper subset of $S$. It is easy to check that $I$ is a completely prime ideal of $S$ which contradicts the assumption that $S$ is s-indecomposable. Thus our assertion is proved in that case when S has a zero element.

As a finite semigroup $S$ is s-indecomposable if and only if $S/K_S$ is s-indecomposable, then the general case is an easy corollary of the case when $S$ has a zero.
\end{proof}

\begin{remark}
In case of finite semigroups with a zero, the s-indecomposability can be described only with the property of the semigroup algebra (Theorem \ref{tetel}). It is not true for finite semigroups in general. For example if $G$ is a finite Abelian group and $Y$ is a finite semilattice such that $|G|=|Y|$ then $\mathbb{C}[G]\cong \bigoplus_{i\in G}\mathbb{C}\cong \mathbb{C}[Y]$. Thus, if $1<|G|=|Y|$, then $G$ is s-indecomposable, $Y$ is not, but $\mathbb{C}[G]\cong \mathbb{C}[Y]$.
\end{remark}

\section{Embeddings into semilattice indecomposable semigroups}

Let $A, B$ semigroups with zeros $z_A,z_B$. $A\times B$ has an ideal $I=(\{z_A\}\times B) \cup (A\times \{z_B\})$. Let $A \times_0 B$ denote the Rees factor semigroup $(A\times B)/I$.

\begin{proposition}\label{nulltimes}
For arbitrary semigroups $A$ and $B$ with zeros, the semigroup $A\times _0 B$ is s-indecomposable if and only if $A$ or $B$ is s-indecomposable.
\end{proposition}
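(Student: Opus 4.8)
The plan is to prove the two implications separately, using the completely prime ideal characterization for one and Tamura's chain characterization for the other. Write $T=A\times_0B$ and let $0$ denote its zero (the image of $I$); the nonzero elements of $T$ are the pairs $(a,b)$ with $a\ne z_A$ and $b\ne z_B$, and such a product $(a,b)(a',b')$ equals $(aa',bb')$ when this lies off $I$ and equals $0$ otherwise. Recall that in any semigroup with zero every nonempty ideal contains the zero, and that ``$x$ divides a power of $y$'' means $y^{k}\in T^{1}xT^{1}$ for some $k\ge1$. For the implication $T$ s-indecomposable $\Rightarrow$ $A$ or $B$ s-indecomposable I would argue by contraposition: assuming both $A$ and $B$ are s-decomposable, the completely prime ideal characterization gives proper nonempty completely prime ideals $P_A\subsetneq A$ and $P_B\subsetneq B$, with $z_A\in P_A$ and $z_B\in P_B$. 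Setting $U=(A\setminus P_A)\times(B\setminus P_B)$, a short check shows $U$ is a subsemigroup of $T$ (the factors are subsemigroups whose products never return to $z_A,z_B$) and that $xy\in U$ forces $x,y\in U$ (because $P_A,P_B$ are ideals). Together these say that $T\setminus U$ is an ideal whose complement is a subsemigroup, i.e.\ a completely prime ideal of $T$; it is proper since $U\ne\emptyset$ and nonempty since $0\in T\setminus U$. Hence $T$ is s-decomposable.

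For the converse, suppose without loss of generality that $A$ is s-indecomposable, and use Tamura's condition: any two elements are joined by a chain in which each term divides a power of the next. Since $0=0\cdot s$, every $s$ divides $0$, giving a one-step chain $s\to0$; concatenating, it suffices to build for each $t\in T$ a chain from $0$ to $t$. If $t$ is nilpotent then some power of $t$ equals $0$ and $0$ divides it, so $0\to t$ directly. Thus the only real task is to reach a non-nilpotent $t=(a',b')$, where necessarily $a'$ and $b'$ are non-nilpotent in $A$ and $B$ respectively.

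By s-indecomposability of $A$ there is a chain $z_A=c_0,c_1,\dots,c_m=a'$ in $A$ with $c_{i-1}$ dividing a power of $c_i$; truncating at the last occurrence of $z_A$ I may assume $c_i\ne z_A$ for $i\ge1$, so each $(c_i,b')$ is a genuine nonzero element of $T$. I claim $0\to(c_1,b')\to\cdots\to(c_m,b')=(a',b')$ is a valid chain. The first step holds because $z_A$ dividing a power of $c_1$ forces $c_1^{k}=z_A$, so $(c_1,b')$ is nilpotent and $0$ divides its vanishing power. For a step $(c_i,b')\to(c_{i+1},b')$ there are two cases: if $c_{i+1}$ is nilpotent then $(c_{i+1},b')$ is nilpotent and $(c_i,b')$ divides the power of it equal to $0$; if $c_{i+1}$ is non-nilpotent, pick the exponent $k\ge1$ witnessing $c_i\mid c_{i+1}^{k}$ in $A$, and note that $(c_{i+1},b')^{k}=(c_{i+1}^{k},(b')^{k})$ is nonzero (since $c_{i+1}^{k}\ne z_A$ and $(b')^{k}\ne z_B$, both coordinates being non-nilpotent) and is divisible by $(c_i,b')$ because $c_i\mid c_{i+1}^{k}$ and $b'\mid(b')^{k}$. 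This completes the chain and proves $T$ s-indecomposable.

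The main obstacle is the coupling of the two coordinates in the multiplication of $T$: a coordinatewise lift of an $A$-chain can collapse to $0$ whenever a power reaches $z_A$ or $z_B$. What makes the argument go through is the dichotomy between nilpotent and non-nilpotent elements: nilpotent pairs are reachable from $0$ for free, since everything divides $0$, while fixing the second coordinate at the non-nilpotent $b'$ guarantees its powers never hit $z_B$ and so never create a spurious zero. The one delicate bookkeeping point is keeping the lifted chain away from $z_A$ internally, which the truncation handles. I expect verifying these divisibility facts — and observing that no finiteness of $A$ or $B$ is used, so that the statement holds for arbitrary semigroups with zero — to be the remaining routine work.
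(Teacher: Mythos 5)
Your proposal is correct and takes essentially the same route as the paper: the ``only if'' direction by contraposition via the completely prime ideal $(P_A\times B)\cup(A\times P_B)$ (your $T\setminus U$ is exactly this set), and the ``if'' direction via Tamura's chain characterization, lifting a chain from $A$ with the second coordinate held fixed and routing it through the zero. The differences are only organizational --- you build chains from $0$ directly inside $A\times_0 B$ with a nilpotent/non-nilpotent case split, where the paper lifts chains between arbitrary pairs into $A\times B$ and then projects via the canonical homomorphism --- and both proofs leave the same routine detail implicit, namely that coordinatewise divisibility gives divisibility in the product only after adjusting the exponent (e.g.\ replacing $k$ by $3k$, so that $c_{i+1}^{3k}=(c_{i+1}^k u)\,c_i\,(v c_{i+1}^k)$ and $(b')^{3k}=(b')^k\, b'\,(b')^{2k-1}$ exhibit two-sided multipliers that are genuine elements rather than adjoined identities).
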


\begin{proof} Let $A$ and $B$ be arbitrary semigroups with zeros $z_A$ and $z_B$. Assume that $A$ is s-indecomposable. Consider the semigroup $A\times _0 B$. We show that $A\times _0 B$ is s-indecomposable. Let $\varphi$ denote the canonical homomorphism of $A\times B$ onto $A\times _0B$. Let $x, y\in A\times _0B$ be arbitrary elements. Let $(a_x, b_x)$ and $(a_y, b_y)$ be elements of $A\times B$ such that $\varphi ((a_x, b_x))=x$ and $\varphi ((a_y, b_y))=y$.
As $A$ is s-indecomposable, there are elements $a_x=a_1, \dots , a_t=z_A$ and $z_A=a_t, a_{t+1}, \cdots, a_k=a_y$ such that $a_i$ divides some power of $a_{i+1}$ for every $i=1, \dots , k-1$ (\cite[Cor.]{Tamura2}). From this it follows that
\[(a_1; b_x), \dots , (a_t;b_x)=(z_A;b_x)\] and
\[(z_A;b_y)=(a_t;b_y), \dots , (a_k;b_y)\] are sequences of $A\times B$ such that every elements of the sequences (except the last) divides some power of the next. Then
\[x=\varphi((a_1; b_x)), \dots , \varphi((a_t;b_x))=\varphi((z_A;b_x))\] and
\[\varphi((z_A;b_y))=\varphi((a_t;b_y)), \dots , \varphi((a_k;b_y))=y\]
are sequences of $A\times _0B$ such that every elements of the sequences (except the last) divides some power of the next. As $\varphi((z_A;b_x))=\varphi((z_A;b_y))$,
we get that
\[x= \varphi((a_1, b_x)),\dots \varphi ((z_A,b_x))=\varphi((z_A;b_y)), \dots ,\varphi((a_k,b_y))=y\]
is a sequence of $A\times _0B$ such that every elements of the sequence (except the last) divides some power of the next. Then, by Corollary of \cite{Tamura2}, $A\times _0B$ is s-indecomposable.
The proof is similar in that case when the semigroup $B$ is s-indecomposable.

Conversely, assume that $A\times _0B$ is s-indecomposable. If $A$ and $B$ are  not s-indecomposable then there are completely prime ideals $P_A\subset A$ and $P_B\subset B$. It is easy to see that $\varphi((P_A\times B)\cup (A\times P_B))$ is a proper completely prime ideal of $A\times _0B$ and so $A\times _0B$ is not s-indecomposable. This is a contradiction, hence $A$ or $B$ must be s-indecomposable.
\end{proof}

\begin{remark}\rm We have a different proof of Proposition \ref{nulltimes} in finite case. Suppose $A$ and $B$ are finite.
By \cite[Cor. 9 p.39 and Lemma 10 p.40]{Okninski} we get:

$$\mathbb{C}[A\times_0 B]\cong \mathbb{C}\oplus\mathbb{C}_0[A\times_0 B]\cong \mathbb{C}\oplus (\mathbb{C}_0[A]\otimes\mathbb{C}_0[B]),$$
so
$$\mathbb{C}[A\times_0 B]/J(\mathbb{C}[A\times_0 B])\cong \mathbb{C}\oplus \left((\mathbb{C}_0[A]/J(\mathbb{C}_0[A]))\otimes\mathbb{C}_0[B]/J(\mathbb{C}_0[B])\right).$$
So $\mathbb{C}[A\times_0 B]/J(\mathbb{C}[A\times_0 B])$ has exactly one $1$-dimensional ideal if and only if $\mathbb{C}_0[A]/J(\mathbb{C}_0[A])$ or $\mathbb{C}_0[B]/J(\mathbb{C}_0[B])$ has no $1$-dimensional ideal. $\mathbb{C}_0[A]/J(\mathbb{C}_0[A])$ has no $1$-dimensional ideal if and only if $\mathbb{C}[A]/J(\mathbb{C}[A])$ has exactly one. By Theorem \ref{tetel} we get the statement.
\end{remark}

We will see that the smallest s-indecomposable semigroup which contains a $2$-element subsemilattice has $5$ elements. Moreover the smallest s-indecomposable semigroup which contains a $3$-element subsemilattice is isomorphic to the semigroup $B_2$ (Theorem \ref{theorem2}, Theorem \ref{theorem3}). First we show that there are only two nonisomorphic $5$-element s-indecomposable semigroup with a $2$-element subsemilattice.

\begin{corollary}\label{B2}
Let $S$ be an s-indecomposable semigroup such that $|S|\leq 5$, and $S$ has at least two commuting idempotents. Then $S\cong B_2$ or $S\cong \mathcal{M}^0(1;2,2;P)$, where $P=\begin{bmatrix}
1 & 1\\
0 & 1
\end{bmatrix}$.
\end{corollary}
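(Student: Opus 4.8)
The plan is to reduce to the case that $S$ has a zero and then to read off its structure from Theorem \ref{tetel} by a dimension count on the contracted semigroup algebra. Fix two commuting idempotents $e\ne f$. Since the kernel $K_S$ is completely simple and, as observed in the introduction, distinct idempotents of a completely simple semigroup never commute, at least one of $e,f$ lies outside $K_S$, so $K_S\subsetneq S$. If $|K_S|\ge 2$ I would pass to the Rees quotient $S/K_S$: it is again s-indecomposable (by the last line of the proof of Theorem \ref{tetel}), it has a zero, $|S/K_S|=|S|-|K_S|+1\le 4$, and it still carries two commuting idempotents---the images of $e$ and $f$ when both avoid $K_S$, and otherwise the new zero together with the image of whichever idempotent avoids $K_S$. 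Because the argument below will show that an s-indecomposable semigroup with a zero and two commuting idempotents has at least $5$ elements, this case cannot occur; hence $|K_S|=1$ and $S$ has a zero $z$.

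Assume now that $S$ has a zero and a nonzero idempotent $e$ (one exists, since at most one of $e,f$ can equal $z$). By the proof of Theorem \ref{tetel}, s-indecomposability of $S$ means precisely that $\mathbb{C}_0[S]/J(\mathbb{C}_0[S])\cong\bigoplus_i M_{n_i}(\mathbb{C})$ has no $1$-dimensional ideal, i.e.\ every $n_i\ge 2$. As a basis element, $e$ is a nonzero idempotent of $\mathbb{C}_0[S]$, so it is not radical and this direct sum is nonzero. I would then compare dimensions: $\dim_{\mathbb{C}}\mathbb{C}_0[S]=|S|-1\le 4$, whereas $\sum_i n_i^2\ge 4$ because the sum is nonzero and every $n_i\ge 2$. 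Equality throughout forces $|S|=5$, a single summand with $n_1=2$, and $J(\mathbb{C}_0[S])=0$; that is, $\mathbb{C}_0[S]\cong M_2(\mathbb{C})$. In particular $|S|\ge 5$, which is exactly the bound invoked in the reduction.

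To return from the algebra to the semigroup I would use the ideal correspondence: were $I$ an ideal of $S$ with $\{z\}\subsetneq I\subsetneq S$, the span of $I\setminus\{z\}$ would be a nonzero proper two-sided ideal of $\mathbb{C}_0[S]$, impossible since $M_2(\mathbb{C})$ is simple. Hence the only ideals of $S$ are $\{z\}$ and $S$, and as $e^2=e\ne z$ gives $S^2\ne\{z\}$, the semigroup $S$ is $0$-simple, thus completely $0$-simple, and by Rees' theorem $S\cong\mathcal{M}^0(G;I,\Lambda;P)$ with $|G|\,|I|\,|\Lambda|=|S|-1=4$. If $|I|=1$ or $|\Lambda|=1$, regularity of $P$ forces all its entries to be nonzero, so $S\setminus\{z\}$ is a subsemigroup and $\{z\}$ is a completely prime ideal, contradicting s-indecomposability; the only factorization $|G|\,|I|\,|\Lambda|=4$ with $|I|,|\Lambda|\ge 2$ is $|G|=1$, $|I|=|\Lambda|=2$. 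Finally, $S\cong\mathcal{M}^0(1;2,2;P)$ for a regular $2\times 2$ matrix $P$ over $\{0,1\}$, and up to row and column permutations there are three such $P$; the all-ones matrix again makes $\{z\}$ completely prime, leaving $P=\begin{bmatrix}1&0\\0&1\end{bmatrix}$ (giving $B_2$) and $P=\begin{bmatrix}1&1\\0&1\end{bmatrix}$ (giving $\mathcal{M}^0(1;2,2;P)$).

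I expect the main obstacle to be organizing the reduction to the zero case so that it is genuinely non-circular---the inequality $|S|\ge 5$ must be obtained as a by-product of the dimension count in the second step rather than assumed---and the clean transfer of simplicity of $\mathbb{C}_0[S]\cong M_2(\mathbb{C})$ to $0$-simplicity of $S$ through the ideal correspondence. Once $S$ is known to be completely $0$-simple over a trivial group with a $2\times 2$ structure matrix, the enumeration of sandwich matrices, and the verification that the two survivors indeed satisfy the hypotheses, are routine.
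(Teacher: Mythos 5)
Your proposal is correct and follows essentially the same route as the paper's proof: both use Theorem \ref{tetel} together with a dimension count (all non-trivial simple summands have $n_i\geq 2$, and the hypothesis of two commuting idempotents rules out their all lying in the completely simple kernel) to force $|K_S|=1$, $|S|=5$ and $\mathbb{C}_0[S]\cong M_2(\mathbb{C})$ — equivalently $\mathbb{C}[S]\cong\mathbb{C}\oplus M_2(\mathbb{C})$ as the paper writes it — then deduce $0$-simplicity from the absence of intermediate ideals in the algebra, and finish with the Rees theorem. The only differences are organizational: you package the dimension bound as a reusable step applied both to $S/K_S$ and to $S$, and you carry out explicitly the enumeration of sandwich matrices that the paper leaves as ``a matter of checking.''
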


\begin{proof} By Theorem \ref{tetel} we get $\mathbb{C}[S/K_S]/J(\mathbb{C}[S/K_S])$ is isomorphic to $\mathbb{C}$ or $\mathbb{C}\oplus M_2(\mathbb{C})$. If $\mathbb{C}[S/K_S]/J(\mathbb{C}[S/K_S])\cong \mathbb{C}$ then $\mathbb{C}_0[S/K_S]$ is nilpotent. If $\mathbb{C}_0[S/K_S]$ is nilpotent then all idempotents of $S$ contained in $K_S$. So there are two commuting idempotents in $K_S$ which contradicts the fact that $K_S$ is completely simple. Hence $\mathbb{C}[S/K_S]/J(\mathbb{C}[S/K_S])\cong \mathbb{C}\oplus M_2(\mathbb{C})$. Moreover $\dim(\mathbb{C}[S/K_S]/J(\mathbb{C}[S/K_S]))=\dim(\mathbb{C}[S])$ and so $J(\mathbb{C}[S/K_S])=0$ and $|K_S|=1$. Thus $S$ has a zero $z$ and $\mathbb{C}[S]\cong \mathbb{C}\oplus M_2(\mathbb{C})$. If $I$ is an ideal of $S$ then $\mathbb{C}[I]$ is an ideal of $\mathbb{C}[S]$.
$\mathbb{C}[S]$ has exactly two proper ideals: one of them is the augmentation ideal (see \cite[p.35]{Okninski}) and the other is spanned by $z$. Consequently $S$ is a (finite) $0$-simple semigroup, so it is completely $0$-simple. Using the Rees Theorem, it is a matter of checking to see that $S$ is isomorphic to one of the two semigroups listed in the corollary.
\end{proof}

\begin{corollary}
Every finite semigroup $S$ can be embedded into an s-indecomposable semigroup containing $4|S|+1$ elements.
\end{corollary}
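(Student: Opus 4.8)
The plan is to realize the required s-indecomposable overgroup through the construction $A\times_0 B$ introduced just above, taking $B=B_2$ and $A=S^0$, where $S^0$ is obtained from $S$ by adjoining a new zero $0$ (so that $|S^0|=|S|+1$ and the set of nonzero elements of $S^0$ is exactly $S$, even if $S$ already has a zero). Since $B_2$ is s-indecomposable (it is $0$-simple and has no proper completely prime ideal, as $e_{11}e_{22}=0$ shows its nonzero part is not closed), Proposition \ref{nulltimes} gives at once that $S^0\times_0 B_2$ is s-indecomposable, regardless of what $S$ is.

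Next I would count. The nonzero elements of $A\times_0 B$ are the pairs $(a,b)$ with $a\neq z_A$ and $b\neq z_B$, together with the single collapsed zero, so $|A\times_0 B|=(|A|-1)(|B|-1)+1$. With $|S^0|=|S|+1$ and $|B_2|=5$ this gives $|S^0\times_0 B_2|=|S|\cdot 4+1=4|S|+1$, the desired size.

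It remains to exhibit the embedding of $S$. The four nonzero elements of $B_2=\mathcal{M}^0(1;2,2;I)$ behave as matrix units $e_{i\lambda}$ for $i,\lambda\in\{1,2\}$, with $e_{i\lambda}e_{j\mu}=e_{i\mu}$ when $\lambda=j$ and $e_{i\lambda}e_{j\mu}=0$ otherwise; in particular $e_{11}$ is idempotent. I would define $\phi\colon S\to S^0\times_0 B_2$ by $\phi(s)=(s,e_{11})$. For $s,t\in S$ the product $st$ is computed in $S$ and hence is a nonzero element of $S^0$, while $e_{11}e_{11}=e_{11}\neq 0$; thus the pair $(st,e_{11})$ avoids the ideal that is collapsed to zero, and so $\phi(s)\phi(t)=(st,e_{11})=\phi(st)$. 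Therefore $\phi$ is a homomorphism, and it is plainly injective, giving the embedding.

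There is essentially no hard step here: the statement is a direct combination of Proposition \ref{nulltimes} with the elementary facts that $B_2$ is s-indecomposable and has five elements. The only two points requiring care are that a new zero must be adjoined to $S$ even when $S$ already possesses one (otherwise the cardinality count fails), and that the chosen second coordinate must be an idempotent with nonzero square, so that $\phi$ both respects multiplication and lands outside the collapsed ideal.
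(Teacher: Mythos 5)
Your proposal is correct and follows essentially the same route as the paper: both form $S^0\times_0 T$ with a new zero adjoined to $S$ (even if $S$ already has one), invoke Proposition \ref{nulltimes} for s-indecomposability, and embed $S$ via $s\mapsto(s,e)$ for a nonzero idempotent $e$ of the five-element factor. The only cosmetic differences are that the paper allows $T$ to be either five-element semigroup of Corollary \ref{B2} while you fix $T=B_2$, and that you spell out the cardinality count and the homomorphism verification that the paper dismisses as obvious.
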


\begin{proof}
Let $S$ be an arbitrary finite semigroup. Denote $S^0$ the semigroup $S$ with a zero adjoined (also in that case when $S$ has a zero). Let $T$ be a $5$-element s-indecomposable semigroup with a zero and an other idempotent $e$ (these semigroups are described in Corollary \ref{B2}). Since $T$ is an s-indecomposable semigroup with a zero, then $S^0\times_0 T$ is s-indecomposable by Proposition \ref{nulltimes}. Moreover $|S^0\times_0 T|=4|S|+1$. Let $\varphi$ denote the canonical homomorphism of $S^0\times T$ onto $S^0\times _0 T$. Define the homomorphism $\pi:S\rightarrow S^0\times T$ by $\pi(s):=(s,e)$.
It is obvious that $\varphi\circ\pi$ is an embedding of $S$ into the s-indecomposable semigroup $S^0\times_0 T$.
\end{proof}

\begin{proposition}\label{plusone}
Every finite s-indecomposable semigroup $S$ with a zero can be embedded into an s-indecomposable semigroup containing $|S|+1$ elements.
\end{proposition}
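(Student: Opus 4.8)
The plan is to construct the required semigroup explicitly by adjoining a single new element to $S$, and then to verify s-indecomposability through the divisibility-chain criterion of Tamura recalled in Section 3, rather than through the algebra characterization of Theorem \ref{tetel}. Let $z$ denote the zero of $S$. I would put $S':=S\cup\{u\}$ for a new symbol $u\notin S$, keep the multiplication of $S$ unchanged, and make $u$ behave \emph{nilpotently over the zero}, declaring $us=su=z$ for every $s\in S$ and $uu=z$.

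The first task is to check that $S'$ is a semigroup, i.e. that this multiplication is associative. Since any product containing a factor $u$ collapses to $z$ (as $ux=xu=z$ for every $x\in S'$ and $z$ is absorbing), associativity of a triple $(p,q,r)\in(S')^{3}$ is immediate as soon as one entry equals $u$: both bracketings evaluate to $z$. For triples lying inside $S$ it holds because $S$ is a semigroup. Hence $S'$ is a semigroup, $S$ is a subsemigroup of it, $z$ remains the zero, and $|S'|=|S|+1$, so it only remains to prove that $S'$ is s-indecomposable.

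For that I would use the criterion that for every $a,b\in S'$ there must be a sequence $a=a_0,\dots,a_n=b$ in which each $a_{i-1}$ divides some power of $a_i$. Two divisibility facts drive everything: $u$ divides $z$ (because $z=uu$), and, conversely, $z$ divides the power $u^{2}=z$ of $u$. Thus $z$ and $u$ are joined in both directions by the length-one steps $u\to z$ and $z\to u$. For arbitrary $a,b$ I would route the sequence through $z$. Any two elements of $S$ are already connected by a sequence inside $S$ by the s-indecomposability of $S$, and such a sequence stays valid in $S'$ because divisibility in $S$ persists in $S'$. To reach $u$ from $a\in S$ I append the step $z\to u$ to a sequence $a\to\dots\to z$ in $S$; to leave $u$ towards $b\in S$ I prepend the step $u\to z$ to a sequence $z\to\dots\to b$ in $S$. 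Together with the trivial case $a=b=u$ this covers all pairs, so $S'$ is s-indecomposable.

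The one genuine subtlety, and the step I expect to be the main obstacle, is the choice $uu=z$. This is precisely what makes $u$ both reachable and leavable in the divisibility order: the powers of $u$ are exactly $\{u,z\}$, so the power $z$ of $u$ is divisible by $z$, while $u$ itself divides $z$. Had I instead made $u$ idempotent ($uu=u$) with $us=su=z$, then no element of $S$ would divide the sole nontrivial power $u$ of $u$, the subset $S$ would be a completely prime ideal of $S'$, and $S'$ would fail to be s-indecomposable. So the whole argument hinges on making the adjoined element nilpotent with square equal to the zero; once that choice is fixed, the chain verification is routine.
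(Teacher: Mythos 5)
Your construction is \emph{identical} to the paper's: the paper adjoins to $S$ an element $z'$ with $z'x=xz'=z$ for all $x\in S$ and $(z')^2=z$, which is exactly your nilpotent element $u$. The difference is entirely in how s-indecomposability of $S'$ is verified. The paper stays inside the algebraic framework it has just built: it notes that $z-z'$ is properly nilpotent in $\mathbb{C}[S']$ (indeed $x(z-z')=0$ for every $x\in S'$ and $(z-z')^2=0$), hence $z-z'\in J(\mathbb{C}[S'])$, so $\mathbb{C}[S']/J(\mathbb{C}[S'])\cong \mathbb{C}[S]/J(\mathbb{C}[S])$; since both $S$ and $S'$ have a zero (so coincide with their Rees quotients by the kernel), Theorem \ref{tetel} transfers s-indecomposability from $S$ to $S'$ in one line. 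Your route through Tamura's divisibility-chain criterion is correct and more elementary: the associativity check is right (any product involving $u$ collapses to $z$), the two one-step links $u\to z$ (as $z=uu$) and $z\to u$ (as $z$ divides $u^2=z$) are right, and routing arbitrary pairs through $z$ covers all cases. Notably, your argument never uses finiteness of $S$, so it actually proves the statement for every s-indecomposable semigroup with a zero, whereas the paper's proof is tied to the finite-dimensional algebra machinery; what the paper's proof buys in exchange is brevity and a further demonstration of how Theorem \ref{tetel} can be exploited, which is one of the stated aims of the paper. Your closing remark about the idempotent alternative ($uu=u$ would make $S$ a proper completely prime ideal of $S'$) is also correct, and explains well why the square of the adjoined element must be $z$.
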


\begin{proof}
Let $z\in S$ be the zero. Define $S'$ the semigroup which can be obtained from $S$ by adjunction of an element $z'$, such that $z'x:=z$, $xz':=z$, and $(z')^2:=z$ where $x$ is an arbitrary element of $S$. Then $z-z'\in J(\mathbb{C}[S])$ so $\mathbb{C}[S]/J(\mathbb{C}[S])\cong \mathbb{C}[S']/J(\mathbb{C}[S'])$. Since $S$ is s-indecomposable by Theorem \ref{tetel}, we get that $S'$ is also s-indecomposable.
\end{proof}

\section{On the cardinality of subsemilattices of semilattice indecomposable finite semigroups}

In this section we answer the question: what is the
cardinality of subsemilattices of s-indecomposable finite semigroups. First we deal with the case when the considered semigroup has a zero (Proposition \ref{bound}). Then we consider the arbitrary case (Theorem \ref{theorem2}).

\begin{proposition}\label{bound}
Let $S$ be an s-indecomposable finite semigroup with a zero. If $Y$ is a subsemilattice of $S$ then $|Y|\leq 2\left\lfloor\frac{|S|-1}{4}\right\rfloor+1$.
\end{proposition}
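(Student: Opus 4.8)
The plan is to translate the problem about the subsemilattice $Y$ into a statement about semisimple algebras via Theorem \ref{tetel}, and then to reduce everything to an elementary integer optimization.

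First I would record the structural consequence of s-indecomposability. Since $S$ has a zero, $K_S=\{z\}$ and $S/K_S\cong S$, so Theorem \ref{tetel} says that $\mathbb{C}[S]/J(\mathbb{C}[S])$ has exactly one $1$-dimensional ideal. Writing the semisimple algebra as $\mathbb{C}[S]/J(\mathbb{C}[S])\cong\bigoplus_{i=1}^{k}M_{n_i}(\mathbb{C})$, its minimal two-sided ideals are exactly the matrix blocks $M_{n_i}(\mathbb{C})$, whose dimensions are $n_i^2$; hence ``exactly one $1$-dimensional ideal'' means that exactly one index has $n_i=1$ while all the rest satisfy $n_i\geq 2$. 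Comparing dimensions also gives the budget constraint $\sum_{i=1}^k n_i^2=\dim(\mathbb{C}[S]/J(\mathbb{C}[S]))\leq\dim\mathbb{C}[S]=|S|$.

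Next I would bring in $Y$. Because $Y$ is a subsemigroup, $\mathbb{C}[Y]$ is a subalgebra of $\mathbb{C}[S]$, and by Lemma \ref{direkt} it is semisimple, $\mathbb{C}[Y]\cong\bigoplus_{y\in Y}\mathbb{C}$. The intersection $\mathbb{C}[Y]\cap J(\mathbb{C}[S])$ is a nil two-sided ideal of $\mathbb{C}[Y]$, hence contained in $J(\mathbb{C}[Y])=0$; therefore the composite $\mathbb{C}[Y]\hookrightarrow\mathbb{C}[S]\twoheadrightarrow\mathbb{C}[S]/J(\mathbb{C}[S])$ is injective. Under this embedding the $|Y|$ pairwise orthogonal primitive idempotents of $\mathbb{C}[Y]\cong\mathbb{C}^{|Y|}$ map to $|Y|$ pairwise orthogonal nonzero idempotents of $\bigoplus_{i=1}^k M_{n_i}(\mathbb{C})$. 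A rank count then bounds $|Y|$: for $e=(e^{(1)},\dots,e^{(k)})$ put $\rho(e)=\sum_i\operatorname{rank}(e^{(i)})$; since orthogonal idempotents have images forming a direct sum in each block, $\rho$ is additive on sums of orthogonal idempotents, while $\rho(\cdot)\leq\sum_i n_i$ always. Thus $m$ pairwise orthogonal nonzero idempotents satisfy $m\leq\rho(\text{their sum})\leq\sum_i n_i$, so $|Y|\leq\sum_{i=1}^k n_i$.

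Finally I would carry out the optimization: maximize $\sum_{i=1}^k n_i$ subject to $\sum_{i=1}^k n_i^2\leq|S|$, one $n_i=1$, and the rest $\geq2$. Separating off the block with $n_1=1$, this is the same as maximizing $T=\sum_{i\geq 2}n_i$ under $\sum_{i\geq 2}n_i^2\leq|S|-1$ with all $n_i\geq2$. The hard part is extracting the exact floor: the naive estimate $n_i\leq n_i^2/2$ yields only $T\leq(|S|-1)/2$, which is too weak because it ignores integrality. To sharpen it I would minimize $\sum n_i^2$ for a fixed value of $T$ by taking the parts as small as possible (all $2$'s when $T$ is even, and $(T-3)/2$ twos together with one $3$ when $T$ is odd), obtaining the minimal values $2T$ and $2T+3$ respectively. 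Substituting these into $\sum n_i^2\leq|S|-1$ and exploiting the parity of $T$ forces $T\leq2\lfloor(|S|-1)/4\rfloor$ in each residue class of $|S|-1$ modulo $4$. Hence $|Y|\leq 1+T\leq 2\lfloor\frac{|S|-1}{4}\rfloor+1$, as claimed.
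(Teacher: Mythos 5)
Your proof is correct and follows essentially the same route as the paper's: Theorem \ref{tetel} together with Lemma \ref{direkt} embeds $\mathbb{C}[Y]$ into $\mathbb{C}[S]/J(\mathbb{C}[S])\cong\mathbb{C}\oplus\bigoplus_{i=1}^{k}M_{n_i}(\mathbb{C})$ with all $n_i\geq 2$, and counting pairwise orthogonal idempotents (the paper phrases this as simultaneous diagonalization of commuting projections) gives $|Y|\leq 1+\sum_{i=1}^{k}n_i$. The only divergence is at the final step: the paper simply asserts $\sum_{i=1}^{k}n_i\leq 2\left\lfloor\frac{|S|-1}{4}\right\rfloor$ inside the displayed chain (\ref{equation1}), whereas your parity-based integer optimization (minimal cost $2T$ for even $T$, $2T+3$ for odd $T$) actually proves it — a worthwhile addition, since the naive estimate $\sum_i n_i\leq\frac{1}{2}\sum_i n_i^2\leq\frac{|S|-1}{2}$ by itself does not yield the floor when $|S|-1\equiv 2,3 \pmod 4$.
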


\begin{proof}
By Theorem \ref{tetel} $ \mathbb{C}[S]/J(\mathbb{C}[S])\cong \mathbb{C}\oplus\bigoplus_{i=1}^kM_{n_i}(\mathbb{C})$ such that $n_i\ne 1$ ($i=1\dots k$). By Lemma \ref{direkt}, $J(\mathbb{C}[S])\cap \mathbb{C}[Y]=0$ and so $\mathbb{C}[S]/J(\mathbb{C}[S])$ has a subalgebra which is isomorphic to $\mathbb{C}[Y]$. So it contains $|Y|$ commuting linearly independent projections, which are simultaneously diagonalizable. Thus
\begin{equation}\label{equation1}
|Y|\leq 1+\sum_{i=1}^kn_i\leq 1+\sum_{i=1}^{\left\lfloor\frac{|S|-1}{4}\right\rfloor}2=2\left\lfloor\frac{|S|-1}{4}\right\rfloor+1.
\end{equation} 

\end{proof}
 
\begin{theorem}\label{theorem2}
Let $S$ be an s-indecomposable finite semigroup. 
\begin{enumerate}[(i)]
\item If $Y$ is a subsemilattice of $S$ then $|Y|\leq 2\left\lfloor\frac{|S|-1}{4}\right\rfloor+1$.

\item For every positive integer $n$, there is a semigroup $S$ such $|S|=n$ and there is a subsemilattice $Y$ of $S$ such that $|Y|= 2\left\lfloor\frac{|S|-1}{4}\right\rfloor+1$.
\end{enumerate}
\end{theorem}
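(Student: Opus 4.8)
The plan is to deduce part (i) from Proposition \ref{bound} by passing to the quotient by the kernel, and to prove part (ii) by an explicit construction together with Proposition \ref{plusone}.

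For part (i), let $S$ be a finite s-indecomposable semigroup with kernel $K_S$, and let $Y$ be a subsemilattice. I would apply Proposition \ref{bound} to $S/K_S$, which is s-indecomposable (the reduction already recorded at the end of the proof of Theorem \ref{tetel}) and has a zero $z$, namely the image of $K_S$. The key point is that the canonical homomorphism $\varphi\colon S\to S/K_S$ sends $Y$ onto a subsemilattice $\varphi(Y)$ of the \emph{same} cardinality. Indeed, all elements of $Y$ are idempotent and pairwise commute; since $K_S$ is completely simple, two distinct commuting idempotents cannot both lie in $K_S$, so $|Y\cap K_S|\le 1$. As $\varphi$ is injective off the ideal $K_S$ and collapses $K_S$ to the single new point $z$, in both cases ($Y\cap K_S=\emptyset$ and $|Y\cap K_S|=1$) one checks $|\varphi(Y)|=|Y|$. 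Then Proposition \ref{bound} gives $|Y|=|\varphi(Y)|\le 2\lfloor(|S/K_S|-1)/4\rfloor+1$, and since $|S/K_S|=|S|-|K_S|+1\le|S|$ and the floor is monotone, this is at most $2\lfloor(|S|-1)/4\rfloor+1$.

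For part (ii), I first reduce to the case $|S|\equiv 1\pmod 4$. The quantity $2\lfloor(n-1)/4\rfloor+1$ takes the constant value $2k+1$ on each block $n\in\{4k+1,4k+2,4k+3,4k+4\}$. Hence, once I have an s-indecomposable semigroup of size $4k+1$ with a zero and a subsemilattice of size $2k+1$, applying Proposition \ref{plusone} one, two, or three times yields s-indecomposable semigroups of sizes $4k+2,4k+3,4k+4$ still containing that subsemilattice (the embedding of Proposition \ref{plusone} fixes $S$ pointwise), so the bound is attained there too. The block $k=0$ (sizes $1$ through $4$, bound $1$) is covered by the trivial one-element semigroup together with Proposition \ref{plusone}, or directly by null semigroups, which are s-indecomposable and whose only subsemilattice is $\{0\}$. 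It remains to build, for each $k\ge 1$, an extremal example of size $4k+1$: I take $S_k$ to be the $0$-direct union (orthogonal sum) of $k$ copies of the Brandt semigroup $B_2$, i.e.\ the disjoint union of $k$ copies of $B_2$ with their zeros identified to a common zero $z$ and all products between distinct copies set to $z$. This is a primitive inverse semigroup, matching the description announced in the abstract. Each copy contributes $4$ non-zero elements, so $|S_k|=4k+1$; writing $e_1^{(i)},e_2^{(i)}$ for the two non-zero idempotents of the $i$-th copy, one has $e_1^{(i)}e_2^{(i)}=z$ and idempotents from different copies also multiply to $z$, so $\{z\}\cup\{e_1^{(i)},e_2^{(i)}:1\le i\le k\}$ is a subsemilattice of size $2k+1$.

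The main obstacle is to verify that $S_k$ is s-indecomposable, and this is exactly where Theorem \ref{tetel} pays off. Because products of basis elements from different copies vanish in the contracted algebra, $\mathbb{C}_0[S_k]\cong\bigoplus_{i=1}^k\mathbb{C}_0[B_2]\cong M_2(\mathbb{C})^{\,k}$, which is semisimple; as $S_k$ has a zero, $\mathbb{C}[S_k]\cong\mathbb{C}\oplus\mathbb{C}_0[S_k]\cong\mathbb{C}\oplus M_2(\mathbb{C})^{\,k}$ is already semisimple, so $\mathbb{C}[S_k]/J(\mathbb{C}[S_k])$ has exactly one $1$-dimensional ideal, and Theorem \ref{tetel} gives s-indecomposability. (Alternatively one can check Tamura's divisibility criterion directly; the only delicate point is linking two different copies, which is achieved by routing through a nilpotent element $a$ of one copy and using that every idempotent divides the zero power $a^2=z$.) Together with the $B_2$ of Corollary \ref{B2} as the case $k=1$, this produces extremal examples in every size, completing the proof.
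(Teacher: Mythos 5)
Your proof is correct, and it splits against the paper's as follows. Part (i) is essentially identical to the paper's argument: pass to $S/K_S$, observe that $|Y\cap K_S|\leq 1$ because $K_S$ is completely simple, apply Proposition \ref{bound}, and finish by monotonicity of the floor. In part (ii) your skeleton is also the paper's (build an extremal example of size $4k+1$ with a zero, then pad with Proposition \ref{plusone} across each block of four sizes, handling $k=0$ trivially), but the extremal example is certified differently. The paper takes $Y\times_0 B_2$ for an arbitrary $(k+1)$-element semilattice $Y$ and invokes Proposition \ref{nulltimes}; you take the $0$-direct union $S_k$ of $k$ copies of $B_2$ --- which is in fact isomorphic to $Y\times_0 B_2$ for $Y$ the flat semilattice (an antichain of $k$ atoms with a zero adjoined), so your example lies inside the paper's family --- and you prove s-indecomposability by computing $\mathbb{C}_0[S_k]\cong \bigoplus_{i=1}^k M_2(\mathbb{C})$, hence $\mathbb{C}[S_k]\cong \mathbb{C}\oplus\bigoplus_{i=1}^k M_2(\mathbb{C})$, and applying Theorem \ref{tetel}. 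Both verifications are sound. The paper's route buys generality and reuse: Proposition \ref{nulltimes} holds for arbitrary (even infinite) semigroups with zero and for every choice of $Y$, which yields the richer supply of extremal examples the paper needs later when classifying $B_2$-combinatorial semigroups. Your route buys economy within this proof: it bypasses Proposition \ref{nulltimes} entirely and showcases the algebraic criterion of Theorem \ref{tetel}; it is essentially the same idea as the paper's own Remark following Proposition \ref{nulltimes}, which reproves that proposition algebraically in the finite case. Your side remarks (that Proposition \ref{plusone} fixes $S$ pointwise and preserves the zero, so the subsemilattice survives the padding, and that null semigroups cover the sizes $n\leq 4$) are also correct.
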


\begin{proof}
\begin{enumerate}[(i)]

\item $K_S$ is a finite completely simple semigroup. So if $e,f\in K_S$ are commuting idempotents then $e=f$. Thus 
$$|Y\cap K_S|\leq 1,$$
and so $Y\cong Y/(Y\cap K_S)$.
Obviously $S/K_S$ is an s-indecomposable semigroup with a zero, and $Y/(Y\cap K_S)$ is a subsemilattice of $S/K_S$. By Proposition \ref{bound}, we get:
$$|Y/(Y\cap K_S)|\leq 2\left\lfloor\frac{|S/K_S|-1}{4}\right\rfloor+1.$$
Thus
\begin{equation}\label{equation2}
\begin{split}
|Y|=|Y/(Y\cap K_S)|\leq 2\left\lfloor\frac{|S/K_S|-1}{4}\right\rfloor+1\leq 2\left\lfloor\frac{|S|-1}{4}\right\rfloor+1.
\end{split}
\end{equation}

\item Let $n$ be a positive integer. We can consider $n$ in the form $n=4k+1+l$ where $0\leq l<4$. Let $Y$ be a semilattice such that $|Y|=k+1$. By Proposition \ref{nulltimes}, $Y\times_0 B_2$ is s-indecomposable, because $B_2$ is s-indecomposable (Corollary \ref{B2}). $B_2$ has a $3$-element subsemilattice $V$. So $Y\times_0 B_2$ has a subsemilattice $Y\times_0 V$, it has $2k+1$ elements. Applying $l$ times the embedding of Proposition \ref{plusone} to $Y\times_0 B_2$, we get an $n$-element s-indecomposable semigroup in which $Y\times_0V$ is a subsemilattice containing $2k+1=2\left\lfloor\frac{|S|-1}{4}\right\rfloor+1$ elements.
\end{enumerate}
\end{proof}

In this paper we deal with only that s-indecomposable semigroups $S$ containing a subsemilattice $Y$ with $|Y|=2\left\lfloor\frac{|S|-1}{4}\right\rfloor+1$ which contain $4k+1$ elements. In the next section we describe the structure of these ones.

\section{$B_2$-combinatorial semigroups}

\begin{definition}\label{def}
A semigroup $S$ is said to be $B_2$-combinatorial if $S$ is s-indecomposable, $|S|=4k+1$ ($k$ is a nonnegative integer) and $S$ has a subsemilattice $Y$ with  $|Y|=2\left\lfloor\frac{|S|-1}{4}\right\rfloor+1=\frac{|S|+1}{2}=2k+1$.
\end{definition}

The name $B_2$-combinatorial will be clear at Theorem \ref{theorem3}. First of all we note that the semigroup $B_2$ is $B_2$-combinatorial.

\begin{proposition}\label{ex}
Let $S$ be a $B_2$-combinatorial semigroup. Then all of the following assertions hold. 
\begin{enumerate}[(i)]
\item $S$ has a zero.
\item The semigroup algebra $\mathbb{C}[S]$ is isomorphic to $ \mathbb{C}\oplus\bigoplus_{i=1}^k M_2(\mathbb{C})$.
\item Every ideal of $S$ is $B_2$-combinatorial.
\item Every homomorphic image of $S$ is $B_2$-combinatorial.
\end{enumerate}
\end{proposition}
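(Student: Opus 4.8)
The plan is to prove the four assertions about $B_2$-combinatorial semigroups in a logical order, deriving the later ones from the earlier and from the machinery already established, especially Theorem \ref{tetel} and Proposition \ref{bound}. For assertion (i), I would argue that a $B_2$-combinatorial semigroup must have a zero by examining the chain of inequalities \eqref{equation2} used in the proof of Theorem \ref{theorem2}(i). Since $S$ achieves equality $|Y| = 2\lfloor (|S|-1)/4\rfloor + 1$, every inequality in \eqref{equation2} must be an equality; in particular $\lfloor (|S/K_S|-1)/4\rfloor = \lfloor (|S|-1)/4\rfloor$, which forces $|K_S|$ to be small. The finer point is that $S$ having a zero means $|K_S| = 1$. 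I expect to show $K_S$ must be trivial by combining the equality condition with the fact that $K_S$ is completely simple, so it contributes to matrix blocks of size at least $2$ if $|K_S| > 1$.

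For assertion (ii), I would use Theorem \ref{tetel} together with the equality condition. Once (i) gives a zero, so $S/K_S \cong S$, Theorem \ref{tetel} yields $\mathbb{C}[S]/J(\mathbb{C}[S]) \cong \mathbb{C} \oplus \bigoplus_{i=1}^m M_{n_i}(\mathbb{C})$ with all $n_i \neq 1$. The extremal equality in \eqref{equation1} of Proposition \ref{bound} forces every $n_i = 2$ and $m = k$, so $\mathbb{C}[S]/J(\mathbb{C}[S]) \cong \mathbb{C} \oplus \bigoplus_{i=1}^k M_2(\mathbb{C})$. To upgrade this to $\mathbb{C}[S]$ itself (rather than the semisimple quotient), I would compare dimensions: $\dim \mathbb{C}[S] = |S| = 4k+1$, while $\dim(\mathbb{C} \oplus \bigoplus_{i=1}^k M_2(\mathbb{C})) = 1 + 4k = 4k+1$. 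Since the dimension of the semisimple quotient already equals $\dim \mathbb{C}[S]$, the radical $J(\mathbb{C}[S])$ must be zero, giving $\mathbb{C}[S] \cong \mathbb{C} \oplus \bigoplus_{i=1}^k M_2(\mathbb{C})$ as claimed.

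For assertions (iii) and (iv), I would proceed by transferring the defining properties through ideals and quotients. For (iv), given a surjective homomorphism $S \to T$, the image $T$ is again s-indecomposable (a homomorphic image of an s-indecomposable semigroup is s-indecomposable, since semilattice homomorphic images factor through), and one must check that $T$ has the right cardinality $|T| = 4k'+1$ and a subsemilattice of size $2k'+1$. The semisimplicity from (ii) is the key tool: a quotient of $S$ induces a surjection of semigroup algebras, and since $\mathbb{C}[S]$ is semisimple, $\mathbb{C}[T]$ is a direct summand, forcing its Wedderburn components to be among $\mathbb{C}$ and copies of $M_2(\mathbb{C})$; counting dimensions then pins down the parameters. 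For (iii), an ideal $I$ of $S$ gives rise to $\mathbb{C}[I]$ as an ideal of the semisimple $\mathbb{C}[S]$, hence itself semisimple of the same restricted form, and the subsemilattice structure is inherited appropriately.

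The main obstacle I anticipate is assertion (i), establishing that $S$ must have a zero rather than merely a small kernel. The equality-forcing argument on \eqref{equation2} gives control on $\lfloor(|S/K_S|-1)/4\rfloor$ versus $\lfloor(|S|-1)/4\rfloor$, but bridging from ``the floor functions agree'' to ``$|K_S| = 1$'' requires care: one must rule out the possibility that $K_S$ is a nontrivial completely simple semigroup whose removal still leaves the floor unchanged yet whose algebra contributes extra matrix blocks that would violate the tight dimension count from (ii). I expect to resolve this by observing that any completely simple $K_S$ with $|K_S| > 1$ contributes at least one matrix component of size $\geq 2$ (since a nontrivial group or rectangular-band structure cannot embed its semigroup algebra into only $\mathbb{C}$'s), and reconciling this with the exact block structure $\mathbb{C} \oplus \bigoplus_{i=1}^k M_2(\mathbb{C})$ and the cardinality $|S| = 4k+1$ forces $|K_S| = 1$, i.e.\ $S$ has a zero. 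The other assertions should then follow more routinely by the algebra-transfer and dimension-counting techniques already exhibited in the earlier proofs.
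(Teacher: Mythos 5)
Your outline for (ii) matches the paper, and your sketches for (iii) and (iv) are in the spirit of the paper's arguments, but there is a genuine gap in (i), which you yourself flag as the main obstacle. You propose to rule out $|K_S|>1$ by ``reconciling'' the kernel's contribution of matrix blocks with ``the exact block structure $\mathbb{C}\oplus\bigoplus_{i=1}^k M_2(\mathbb{C})$ and the cardinality $|S|=4k+1$ from (ii).'' But your own derivation of (ii) begins ``Once (i) gives a zero, so $S/K_S\cong S$, Theorem \ref{tetel} yields\dots'': without (i), Theorem \ref{tetel} and Proposition \ref{bound} only give you control of $\mathbb{C}[S/K_S]$, not of $\mathbb{C}[S]$, so (ii) is not available while proving (i). As organized, the argument for (i) is circular.

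Moreover, the obstacle you anticipate does not actually exist, because you are not using the hypothesis $|S|=4k+1$ built into Definition \ref{def}. Equality in (\ref{equation2}) gives $\left\lfloor\frac{|S/K_S|-1}{4}\right\rfloor=\left\lfloor\frac{|S|-1}{4}\right\rfloor=k$, and $|S/K_S|-1=|S|-|K_S|=4k+1-|K_S|$, so this floor equals $k$ precisely when $4k\leq 4k+1-|K_S|$, i.e.\ $|K_S|\leq 1$. Thus ``the floors agree'' forces $|K_S|=1$ by arithmetic alone; this is exactly the paper's one-line argument (``$\left\lfloor\frac{|S/K_S|-1}{4}\right\rfloor=\left\lfloor\frac{|S|-1}{4}\right\rfloor$ and so $|S/K_S|=|S|$, thus $|K_S|=1$''). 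A nontrivial kernel leaving the floor unchanged is only possible when $|S|\not\equiv 1 \pmod 4$, which the definition excludes. (If you prefer an algebraic route, the non-circular version is to apply the equality analysis of Proposition \ref{bound} to $S/K_S$, which does have a zero: equality forces $\dim\bigl(\mathbb{C}[S/K_S]/J(\mathbb{C}[S/K_S])\bigr)=4k+1\leq |S/K_S|=4k+2-|K_S|$, again giving $|K_S|=1$.) A smaller but real omission is in (iii) and (iv): being $B_2$-combinatorial requires exhibiting a subsemilattice of size exactly $2l+1$ in the ideal or image, and ``counting dimensions pins down the parameters'' only yields the cardinality $4l+1$. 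The paper's device, which you would need, is that $\mathbb{C}[Y]$ is (in a suitable basis) the full diagonal subalgebra of $\mathbb{C}\oplus\bigoplus_{i=1}^k M_2(\mathbb{C})$, so that $\dim(\mathbb{C}[Y]\cap\ker(\hat{\phi}))=2(k-l)$ and hence $|Y/\ker(\phi|_Y)|=2l+1$; the analogous intersection count handles ideals.
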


\begin{proof} Let $S$ be a $B_2$-combinatorial semigroup and let $Y$ denote a subsemilattice of $S$ with $|Y|=2\left\lfloor\frac{|S|-1}{4}\right\rfloor+1$. 
\begin{enumerate}[(i)]
\item By (\ref{equation2}) in proof of Theorem \ref{theorem2} we have $\left\lfloor\frac{|S/K_S|-1}{4}\right\rfloor=\left\lfloor\frac{|S|-1}{4}\right\rfloor$ and so $|S/K_S|=|S|$ thus $|K_S|=1$. Hence $S$ has a zero.
\item If in the proof of Proposition \ref{bound} inequation (\ref{equation1}) is an equation then 
$$\mathbb{C}[S]/J(\mathbb{C}[S])\cong \mathbb{C}\oplus\bigoplus_{i=1}^{\left\lfloor\frac{|S|-1}{4}\right\rfloor} M_2(\mathbb{C}).$$
Hence $\dim(\mathbb{C}[S]/J(\mathbb{C}[S]))=\dim(\mathbb{C}[S])$ which means $J(\mathbb{C}[S])=0$. For $k=\left\lfloor\frac{|S|-1}{4}\right\rfloor$ we get
$$\mathbb{C}[S]\cong \mathbb{C}\oplus\bigoplus_{i=1}^k M_2(\mathbb{C}).$$
\item 
Let $I$ be an ideal of $S$. Since every ideal of an s-indecomposable semigroup is also an s-indecomposable (\cite[Lemma 4]{Tamura1}), then $I$ is s-indecomposable. 
As $Y$ is a possible greatest subsemilattice of $S$, the zero of $S$ is in $Y$. Hence $Y\cap I\ne \emptyset$. It is clear that $Y\cap I$ and $Y/(Y\cap I)$ are subsemilattices of $I$ and $S/I$ respectively. Since $I$ and $S/I$ are s-indecomposable semigroups with zeros, then we can use (i) of Theorem \ref{theorem2}. 
Hence $$|Y\cap I|\leq 2\left\lfloor\frac{|I|-1}{4}\right\rfloor+1\text{ and }|Y/(Y\cap I)|\leq 2\left\lfloor\frac{|S/I|-1}{4}\right\rfloor+1.$$
Moreover
$|Y|=|Y\cap I|+|Y/(Y\cap I)|-1$, $|Y|=\frac{|S|+1}{2}$ and $|S|=|I|+|S/I|-1=4k+1.$

From the previous equations and inequations we get

$$0\leq \left(\left\lfloor\frac{|I|-1}{4}\right\rfloor+\left\lfloor\frac{1-|I|}{4}\right\rfloor\right).$$

From this it follows that $|I|=4l+1$.
Then $|Y\cap I|=2\left\lfloor\frac{|I|-1}{4}\right\rfloor+1$. Indeed, if we suppose indirectly $|Y\cap I|< 2\left\lfloor\frac{|I|-1}{4}\right\rfloor+1$, then we can get that
$$0< \left(\left\lfloor\frac{|I|-1}{4}\right\rfloor+\left\lfloor\frac{1-|I|}{4}\right\rfloor\right),$$
which is a contradiction. Hence $|Y\cap I|= 2\left\lfloor\frac{|I|-1}{4}\right\rfloor+1=2l+1$. $I$ is an s-indecomposable semigroup with $|I|=4l+1$, where $l$ is an integer with $0\leq l\leq k$, and  $Y\cap I$ is a subsemilattice of $I$ with $|Y\cap I|=2l+1$. Thus $I$ is a $B_2$-combinatorial semigroup.

\item
Let $\phi:S\rightarrow T$ be a surjective homomorphism. 

Since every homomorphic image of an s-indecomposable semigroup is also an s-indecomposable semigroup (\cite[Lemma 3]{Tamura1}), then $T$ is s-indecomposable.

Extend $\phi$ to an algebra homomorphism $\hat{\phi}:\mathbb{C}[S]\rightarrow \mathbb{C}[T]$. By (i) $S$ is a semigroup with a zero $z$ so $\phi(z)$ is a zero of $T$. By Corollary 9 of \cite[p.38]{Okninski}, we get
$$\mathbb{C}[T]\cong \mathbb{C}\oplus\mathbb{C}_0[T].$$
 Since $\mathbb{C}_0[S]\cong \bigoplus_{i=1}^k M_2(\mathbb{C})$  for $k=\frac{|S|-1}{4}$ (see (ii)) and $\mathbb{C}_0[T]$ is homomorphic image of $\mathbb{C}_0[S]$ then we get 
 $$\mathbb{C}_0[T]\cong\bigoplus_{i=1}^l M_2(\mathbb{C})$$
for $l=\frac{|T|-1}{4}$, and $|T|=\dim(\mathbb{C}[T])=4l+1$.  So $\ker(\hat{\phi})\cong \bigoplus_{i=1}^{k-l} M_2(\mathbb{C})$.
 
In a suitable basis of $\mathbb{C}[S]\cong \mathbb{C}\oplus \bigoplus_{i=1}^k M_2(\mathbb{C})$ the subalgebra $\mathbb{C}[Y]$ consists of all the diagonal matrices of $\mathbb{C}\oplus \bigoplus_{i=1}^k M_2(\mathbb{C})$. It is easy to see that
$$\dim(\mathbb{C}[Y]\cap \ker(\hat{\phi}))=2(k-l).$$
Thus
$$|Y/\ker(\phi|_Y)|=\dim(\mathbb{C}[Y]/(\mathbb{C}[Y]\cap \ker(\hat{\phi})))=2l+1.$$
Hence $T$ is an s-indecomposable semigroup with $|T|=4l+1$ and $Y/\ker(\phi|_Y)$ is a subsemilattice of $T$ with $|Y/\ker(\phi|_Y)|=2l+1$. It means that $T$ is a $B_2$-combinatorial semigroup.
\end{enumerate}
\end{proof}

\begin{lemma}\label{comp}
Let $S$ be a completely $0$-simple semigroup and $Y$ a subsemilattice of $S$. Then $|Y|\leq \sqrt{|S|-1}+1$. If $|Y|= \sqrt{|S|-1}+1$ then $S\cong \mathcal{M}^0(1;n,n;I)$, where $n=\sqrt{|S|-1}$. 
\end{lemma}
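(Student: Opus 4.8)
The plan is to invoke the Rees Theorem to write $S\cong\mathcal{M}^0(G;I,\Lambda;P)$ as a regular Rees matrix semigroup over a group $G$ with zero, so that $|S|=|G|\,|I|\,|\Lambda|+1$. Writing a typical nonzero element as $(i,g,\lambda)$ with multiplication $(i,g,\lambda)(j,h,\mu)=(i,g\,p_{\lambda j}\,h,\mu)$ when $p_{\lambda j}\neq 0$ and $0$ otherwise, the nonzero idempotents are exactly the elements $E_{i\lambda}:=(i,p_{\lambda i}^{-1},\lambda)$ for the pairs $(i,\lambda)$ with $p_{\lambda i}\neq 0$. The first step is the key structural observation: if $e=E_{i\lambda}$ and $f=E_{j\mu}$ are two \emph{distinct} nonzero idempotents that commute, then $ef=fe=0$. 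Indeed, the first and third coordinates of $ef$ (when nonzero) are $i$ and $\mu$, while those of $fe$ are $j$ and $\lambda$; since $ef=fe$, a common nonzero value would force $i=j$ and $\lambda=\mu$, hence $e=f$, a contradiction. Consequently every subsemilattice $Y$ of $S$ is ``flat'': its nonzero elements annihilate each other pairwise, so as soon as $Y$ contains two distinct nonzero idempotents it must contain $0$ as well.

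Next I would bound the number $m$ of nonzero idempotents of $Y$. For distinct $e=E_{i\lambda}$, $f=E_{j\mu}$ in $Y$ the equalities $ef=0$ and $fe=0$ translate into $p_{\lambda j}=0$ and $p_{\mu i}=0$. If two such idempotents shared a row ($i=j$), then $p_{\lambda j}=0$ would contradict $p_{\lambda i}\neq 0$ (needed for $E_{i\lambda}$ to be idempotent); likewise they cannot share a column. Hence the $m$ idempotents occupy $m$ distinct rows and $m$ distinct columns, giving $m\le\min(|I|,|\Lambda|)$ and therefore $m^2\le|I|\,|\Lambda|\le|G|\,|I|\,|\Lambda|=|S|-1$. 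Since $|Y|\le m+1$ (the extra $+1$ accounting for a possible $0$), this yields
$$|Y|\le m+1\le\sqrt{|S|-1}+1,$$
the desired bound; the degenerate cases $m\le 1$ are checked directly.

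For the equality statement, suppose $|Y|=\sqrt{|S|-1}+1=:n+1$. Then every inequality above is tight: $m=n$, and from $m^2=|I|\,|\Lambda|$ together with $m\le|I|,|\Lambda|$ we get $|I|=|\Lambda|=n$, while $|I|\,|\Lambda|=|G|\,|I|\,|\Lambda|$ forces $|G|=1$. Thus the $n$ idempotents of $Y$ occupy \emph{all} rows and \emph{all} columns, each exactly once. Relabelling $I$ and $\Lambda$ (an isomorphism of Rees matrix semigroups) so that these idempotents lie in the positions $(k,k)$, the idempotency condition gives $p_{kk}\neq 0$ while the annihilation condition gives $p_{kl}=0$ for $k\neq l$; as $G$ is trivial, every nonzero entry equals the identity, so $P$ is the $n\times n$ identity matrix and $S\cong\mathcal{M}^0(1;n,n;I)$.

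The step I expect to be most delicate is the bookkeeping in the equality case, in particular justifying that one may permute the index sets $I$ and $\Lambda$ to diagonalize the positions of the idempotents and then read off $P=I$ from the idempotency and annihilation identities. By contrast, the coordinate argument establishing $ef=fe=0$ for distinct commuting idempotents is the conceptual heart of the proof but is short once the multiplication is set up.
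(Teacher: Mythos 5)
Your proof is correct and follows essentially the same route as the paper's: Rees coordinates, the observation that distinct commuting nonzero idempotents must annihilate each other (forcing zero entries of $P$ and distinct rows and columns), the count $m^2\le |G|\,|I|\,|\Lambda|=|S|-1$, and in the equality case $|G|=1$, $|I|=|\Lambda|=n$ with $P$ a permutation matrix brought to the identity. The only cosmetic difference is that the paper invokes Lemma 3.6 of Clifford--Preston to normalize the sandwich matrix, where you relabel the index sets by hand.
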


\begin{proof}
Let $S$ be a completely $0$-simple semigroup and $Y$ be a subsemilattice of $S$. Let $k:=|Y|-1$.
By the Rees Theorem, $S$ is isomorphic to a Rees matrix semigroup $\mathcal{M}^0(G;n,m;P)$. The nonzero idempotents of $S$ are in the form $((P_{j,i})^{-1};i,j)$ with $P_{j,i}\ne 0$. If $(g;i,j), (h;k,l)$ are different commutable nonzero idempotents of $S$ then $P_{j,k}=0$ and $P_{l,i}=0$. It means that there is a $k\times k$ permutation matrix $R$ and a $k\times k$ diagonal matrix $D$ over $G^0$ such that $RD$ is submatrix of $P$. From $k\leq \min\{n,m\}$ and $|S|=|G|nm+1$ we get $$k\leq \sqrt{|S|-1}.$$
This inequation is an equation if and only if $|G|=1$, $n=m$ and every row and every column of $P$ has exactly one nonzero element. Using the Lemma 3.6. of \cite[p.94]{CliffordPreston}, we get that the inequation is equation if and only if $S\cong \mathcal{M}^0(1;n,n;I)$.
\end{proof}

\begin{proposition}\label{B_2}
If $S$ is a $B_2$-combinatorial $0$-simple semigroup then $S\cong B_2$. 
\end{proposition}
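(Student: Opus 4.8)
The plan is to pit the linear equality coming from $B_2$-combinatoriality against the square-root bound of Lemma \ref{comp}. First I would record the two facts we have about $S$. Since $S$ is $B_2$-combinatorial, Definition \ref{def} gives $|S|=4k+1$ for some nonnegative integer $k$, together with a subsemilattice $Y$ satisfying $|Y|=2k+1$. On the other hand, $S$ is finite and $0$-simple, hence completely $0$-simple, so Lemma \ref{comp} applies to the subsemilattice $Y$ and yields
$$|Y|\leq \sqrt{|S|-1}+1=\sqrt{4k}+1=2\sqrt{k}+1.$$

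Next I would combine the two statements. Substituting $|Y|=2k+1$ into the bound gives $2k+1\leq 2\sqrt{k}+1$, i.e. $k\leq\sqrt{k}$, equivalently $\sqrt{k}\,(\sqrt{k}-1)\leq 0$, which forces $0\leq k\leq 1$. The value $k=0$ would mean $|S|=1$, impossible for a $0$-simple semigroup (a one-element semigroup $\{z\}$ has $z$ as its zero and satisfies $S^2=\{z\}=\{0\}$, so it fails the requirement $S^2\neq\{0\}$ in the definition of $0$-simplicity). Therefore $k=1$, so $|S|=5$ and $|Y|=3$.

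Finally I would invoke the equality clause of Lemma \ref{comp}. With $k=1$ the bound reads $|Y|\leq\sqrt{5-1}+1=3$, and since in fact $|Y|=3$ this inequality is an equality. The equality case of Lemma \ref{comp} then gives $S\cong\mathcal{M}^0(1;n,n;I)$ with $n=\sqrt{|S|-1}=2$, that is, $S\cong B_2$, as claimed.

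The argument is short; its only genuine content is the observation that the affine growth $|Y|=2k+1$ imposed by $B_2$-combinatoriality cannot keep pace with the square-root growth $2\sqrt{k}+1$ permitted by complete $0$-simplicity except for the very smallest $k$. The single point that needs a little care is the exclusion of $k=0$, and this is precisely where the hypothesis of $0$-simplicity (rather than the mere presence of a zero, already guaranteed by Proposition \ref{ex}(i)) is used.
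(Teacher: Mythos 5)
Your proof is correct and follows essentially the same route as the paper: both pit the linear equality $|Y|=\frac{|S|+1}{2}=2k+1$ from Definition \ref{def} against the square-root bound of Lemma \ref{comp}, exclude the one-element case via $0$-simplicity, and conclude from the equality clause of Lemma \ref{comp} that $S\cong\mathcal{M}^0(1;2,2;I)=B_2$. Your version is slightly more explicit (parametrizing by $k$, and noting that finite $0$-simple implies completely $0$-simple so that the lemma applies), but the argument is the same.
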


\begin{proof}
Since $S$ is $B_2$-combinatorial, then it has a subsemilattice $Y$ such $|Y|=\frac{|S|+1}{2}$. By Lemma \ref{comp}, we get $|Y|\leq \sqrt{|S|-1}+1$. From $$\frac{|S|+1}{2}\leq \sqrt{|S|-1}+1$$ we get $1\leq |S|\leq 5$. Since $S$ is $B_2$-combinatorial, then $|S|=4k+1$ for a nonnegative integer $k$. The one-element semigroup is not $0$-simple and so $|S|=5$. Since $S$ is $B_2$-combinatorial, then $S$ has a subsemilattice $Y$ with $|Y|=3$. By Lemma \ref{comp}, we get $S\cong \mathcal{M}^0(1;2,2;I)$.
\end{proof}

Let $S$ be a semigroup. If $J(a)$ denotes the principal ideal of $S$ generated by an element $a\in S$, then $I(a)=\{b|b\in S;J(a)\ne J(b)\}$ is either empty or ideal of $S$. The factor semigroup $J(a)/I(a)$ is called a principal factor of $S$. It is known that every principal factor of any semigroup is $0$-simple, simple or null (\cite[Lemma 2.39. p.73]{CliffordPreston}).

\begin{theorem}\label{theorem3} Let $S$ be a finite semigroup. Then (i) and (ii) are equivalent:
\begin{enumerate}[(i)]
\item $S$ is a $B_2$-combinatorial semigroup,
\item $S$ has a zero and, for every nonzero element $a$ of $S$, the principal factor $J(a)/I(a)$ is isomorphic to the semigroup $B_2$.
\end{enumerate}
\end{theorem}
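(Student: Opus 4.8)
The plan is to prove both implications by exploiting the structural information about $B_2$-combinatorial semigroups gathered in Proposition \ref{ex}, together with the classification of the extremal $0$-simple case in Proposition \ref{B_2}. The key idea is that the principal factors $J(a)/I(a)$ are exactly the building blocks of $S$, each being $0$-simple, simple, or null, and the semigroup-algebra decomposition $\mathbb{C}[S] \cong \mathbb{C} \oplus \bigoplus_{i=1}^k M_2(\mathbb{C})$ from Proposition \ref{ex}(ii) should force each nonzero principal factor into the extremal form $B_2$.

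For the implication (i) $\Rightarrow$ (ii), I would first invoke Proposition \ref{ex}(i) to obtain the zero of $S$. Then I would fix a nonzero element $a$ and analyze the principal factor $J(a)/I(a)$. Since $I(a)$ is either empty or an ideal, and $J(a)$ is an ideal of $S$, by Proposition \ref{ex}(iii) both $J(a)$ and (if nonempty) $I(a)$ are $B_2$-combinatorial, being ideals of a $B_2$-combinatorial semigroup. The principal factor is the Rees quotient $J(a)/I(a)$, which is a homomorphic image of the ideal $J(a)$; by Proposition \ref{ex}(iv) this quotient is again $B_2$-combinatorial. The principal factor is $0$-simple, simple, or null. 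It cannot be null: a null $B_2$-combinatorial semigroup would have nilpotent contracted algebra, forcing all idempotents into the zero and contradicting part (ii) of Proposition \ref{ex} (the presence of matrix blocks $M_2(\mathbb{C})$ requires nonzero idempotents). It cannot be simple (without zero), since a $B_2$-combinatorial semigroup has a zero by Proposition \ref{ex}(i), so its principal factors containing more than the zero must be $0$-simple rather than simple. Therefore $J(a)/I(a)$ is a $0$-simple $B_2$-combinatorial semigroup, and Proposition \ref{B_2} yields $J(a)/I(a) \cong B_2$.

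For the converse (ii) $\Rightarrow$ (i), I would assume every nonzero principal factor is isomorphic to $B_2$ and reconstruct the algebra decomposition. Each copy of $B_2$ contributes a block $M_2(\mathbb{C})$ to $\mathbb{C}_0[S]/J(\mathbb{C}_0[S])$, since $\mathbb{C}_0[B_2] \cong M_2(\mathbb{C})$. The contracted semigroup algebra $\mathbb{C}_0[S]$ is filtered by the principal ideals, and the associated graded object is the direct sum of the contracted algebras of the principal factors; since each such factor is $B_2$ with $\mathbb{C}_0[B_2] \cong M_2(\mathbb{C})$ semisimple, the semisimple quotient $\mathbb{C}_0[S]/J(\mathbb{C}_0[S])$ is $\bigoplus_{i=1}^k M_2(\mathbb{C})$ with no $1$-dimensional block. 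By Theorem \ref{tetel}, the absence of any extra $1$-dimensional ideal means $S$ is s-indecomposable. Counting dimensions, $|S| = 1 + 4k = 4k+1$, and the diagonal idempotents of the matrix blocks assemble into a subsemilattice $Y$ with $|Y| = 2k+1$, matching Definition \ref{def}. Hence $S$ is $B_2$-combinatorial.

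The main obstacle I anticipate is in the converse direction, specifically justifying that the semisimple quotient $\mathbb{C}_0[S]/J(\mathbb{C}_0[S])$ decomposes exactly as $\bigoplus M_2(\mathbb{C})$ indexed by the $B_2$-principal factors, and that the matrix-block idempotents truly form a subsemilattice of the prescribed size $2k+1$ inside $S$ rather than merely inside the algebra. The passage from the principal-series filtration of $\mathbb{C}_0[S]$ to its radical quotient requires the standard fact that the radical quotient of a finite-dimensional algebra is determined by the semisimple parts of its principal-factor algebras (the null factors contributing only to the radical); I would cite the relevant structure theory for contracted semigroup algebras from \cite{Okninski} to make this rigorous, then verify by direct bookkeeping that the idempotents realizing the diagonal projections are genuine idempotents of $S$ that commute pairwise and hence span the required semilattice $Y$.
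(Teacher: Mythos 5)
Your (i)$\Rightarrow$(ii) argument is essentially the paper's own proof: the zero comes from Proposition \ref{ex}(i); $J(a)$ and then $J(a)/I(a)$ are $B_2$-combinatorial by parts (iii) and (iv) of Proposition \ref{ex}; the null and simple possibilities are excluded because a null or simple $B_2$-combinatorial semigroup must be trivial, while the principal factor of a nonzero $a$ has at least two elements (note $z\in I(a)$); and Proposition \ref{B_2} finishes. In the converse direction, part of your alternative algebra route is also sound: the radical quotient of a finite-dimensional algebra splits along an ideal chain (for an ideal $I$ of $A$ one has $J(A/I)=(I+J(A))/I$, whence $A/J(A)\cong I/J(I)\oplus (A/I)/J(A/I)$), so along a principal series $\mathbb{C}_0[S]/J(\mathbb{C}_0[S])\cong\bigoplus_{i=1}^k M_2(\mathbb{C})$, since each nonzero principal factor contributes $\mathbb{C}_0[B_2]\cong M_2(\mathbb{C})$; Theorem \ref{tetel} then gives s-indecomposability, and counting the four nonzero elements in each of the $k$ nonzero $\mathcal{J}$-classes gives $|S|=4k+1$.

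The genuine gap is the step you yourself flag: producing the subsemilattice of size $2k+1$ \emph{inside $S$}. The diagonal projections are elements of $\mathbb{C}[S]/J(\mathbb{C}[S])$, not of $S$, and no amount of bookkeeping identifies them with semigroup elements. What $S$ genuinely contains is $2k+1$ idempotents: the zero, plus the two elements per nonzero $\mathcal{J}$-class whose images are the nonzero idempotents of $B_2$. But nothing in your argument shows these commute \emph{in $S$}. For two such idempotents $e,f$, the products $ef$ and $fe$ fall into strictly lower $\mathcal{J}$-classes (their images in the relevant principal factor are $0$), so $e$ and $f$ commute modulo the radical; yet $ef-fe\in J(\mathbb{C}[S])$ does not force $ef=fe$, because differences of distinct semigroup elements can perfectly well lie in the radical --- exactly as $z-z'$ does in the proof of Proposition \ref{plusone}. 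So commutativity of the idempotents is a real semigroup-theoretic assertion that the algebra picture cannot deliver. The paper closes precisely this point with inverse-semigroup theory, and that is the natural repair for your proof: any inverse of $x\in S$ lies in the $\mathcal{J}$-class of $x$, so inverses are computed inside the principal factor $J(x)/I(x)\cong B_2$, which is a Brandt semigroup; hence every element of $S$ has a unique inverse, $S$ is an inverse semigroup, and its set of idempotents is automatically a subsemilattice, of cardinality $2k+1$. (The paper also obtains s-indecomposability without algebras, from the fact that each principal factor is s-indecomposable and has a divisor of zero, which collapses any semilattice image; your Theorem \ref{tetel} route is a legitimate alternative for that part, but not for the semilattice part.)
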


\begin{proof}
(i)$\Rightarrow$(ii) Let $S$ be a $B_2$-combinatorial semigroup. By (i) of Proposition \ref{ex}, $S$ has a zero. Let $a$ be a nonzero element of $S$. By (iii) and (iv) of Proposition \ref{ex}, $J(a)/I(a)$ is $B_2$-combinatorial. It is easy to see that a simple semigroup or a null semigroup is $B_2$-combinatorial if and only if it contains exactly one element. Hence $J(a)/I(a)$ is $0$-simple. By Proposition \ref{B_2}, $J(a)/I(a)\cong B_2$.

(ii)$\Rightarrow$(i): Every principal factor of $S$ is an inverse semigroup, hence $S$ is also an inverse semigroup. Its idempotents form a semilattice and the number of idempotents is $\frac{|S|+1}{2}$. Since every principal factor is s-indecomposable and has a divisor of zero, we get that $S$ is s-indecomposable. 
\end{proof}

The previous theorem shows that why we use the expression "$B_2$-combinatorial" for semigroups defined in Definition \ref{def}. These semigroups are combinatorial inverse semigroups and the principal factors defined by nonzero elements are isomorphic to the semigroup $B_2$.

\medskip
The $B_2$-combinatorial semigroups are combinatorial ($\mathcal{H}$ relation is identical) inverse semigroups.
If $Y$ is a semilattice then $Y\times_0 B_2$ is a $B_2$-combinatorial semigroup. The next example shows that not all $B_2$-combinatorial semigroups can be constructed in this way.
\begin{example}\rm

If $E$ is a semilattice then $T_E$ denote the Munn semigroup of $E$ described in \cite[p.162]{Howie}. Consider the following semilattices:

\begin{tabular}{c c c c c}
\begin{tikzpicture}[->,>=stealth',shorten >=1pt,auto,node distance=1.2cm,
        thick,main node/.style={circle,fill=black,draw,minimum size=0.2cm,inner sep=0pt]}]

    \node[main node] (1) {};
    \node[main node] (2) [above of=1]  {};
    \node[main node] (3) [above of=2] {};

    \path[-]
    (1) edge node {} (2)
    (2) edge node {} (3);
\end{tikzpicture}&

\begin{tikzpicture}[->,>=stealth',shorten >=1pt,auto,node distance=1.2cm,
        thick,main node/.style={circle,fill=black,draw,minimum size=0.2cm,inner sep=0pt]}]

    \node[main node] (1) {};
    \node[main node] (2) [above left of=1]  {};
    \node[main node] (3) [above right of=1] {};

    \path[-]
    (1) edge node {} (2)
    (1) edge node {} (3);
\end{tikzpicture}&

\begin{tikzpicture}[->,>=stealth',shorten >=1pt,auto,node distance=1.2cm,
        thick,main node/.style={circle,fill=black,draw,minimum size=0.2cm,inner sep=0pt]}]

    \node[main node] (1) {};
    \node[main node] (2) [above left of=1]  {};
    \node[main node] (3) [above right of=1] {};
    \node[main node] (4) [above of=2]  {};
    \node[main node] (5) [above of=3] {};

    \path[-]
    (1) edge node {} (2)
    (1) edge node {} (3)
    (2) edge node {} (4)
    (3) edge node {} (5);
    
\end{tikzpicture}&

\begin{tikzpicture}[->,>=stealth',shorten >=1pt,auto,node distance=1.2cm,
        thick,main node/.style={circle,fill=black,draw,minimum size=0.2cm,inner sep=0pt]}]

    \node[main node] (1) {};
    \node[main node] (2) [above left of=1]  {};
    \node[main node] (3) [above right of=1] {};
    \node[main node] (4) [above left of=3]  {};
    \node[main node] (5) [above right of=3] {};

    \path[-]
    (1) edge node {} (2)
    (1) edge node {} (3)
    (3) edge node {} (4)
    (3) edge node {} (5);
    
\end{tikzpicture}&

\begin{tikzpicture}[->,>=stealth',shorten >=1pt,auto,node distance=1.2cm,
        thick,main node/.style={circle,fill=black,draw,minimum size=0.2cm,inner sep=0pt]}]

    \node[main node] (1) {};
    \node[main node] (2) [above left of=1]  {};
    \node[main node] (3) [above right of=1] {};
    \node[main node] (4) [left of=2]  {};
    \node[main node] (5) [right of=3] {};

    \path[-]
    (1) edge node {} (2)
    (1) edge node {} (3)
    (1) edge node {} (4)
    (1) edge node {} (5);
    
\end{tikzpicture}\\
 $C_3$ & $V$ & $U$ & $F$ & $X$
\end{tabular}

We show that there are only $3$ nonisomorphic $B_2$-combinatorial semigroups containing $9$ elements:

$C_3\times_0 B_2\cong T_U$, $V\times_0 B_2\subset T_X$ and $T_F$.
\begin{proof}
If a semigroup is a combinatorial inverse semigroup then it is a fundamental inverse semigroup (inverse semigroup such the maximum idempotent separating congruence is identical)\cite[Prop. 5.3.7 p.161]{Howie}. 
Let $S$ be a $B_2$-combinatorial semigroup containing $9$ elements. Then $|E(S)|=5$, where $E(S)$ denotes the set of all idempotents of $S$. So $S$ is isomorphic to a full inverse subsemigroup of the Munn semigroup of $E(S)$ (\cite[Thm.5.4.5 p.165]{Howie}).  
It is a  matter of checking to see that there are only three Munn semigroup containing a full inverse subsemigroup with $9$ elements which is $B_2$-combinatorial: $T_U, T_F$ and $T_X$. The semigroups $T_U$ and $T_F$ are $B_2$-combinatorial $9$-element semigroups. $T_X$ has three $9$-element $B_2$-combinatorial subsemigroups, these are isomorphic to $V\times_0 B_2$.
\end{proof}

Suppose that there is a semilattice $Y$ such that $T_F\cong Y\times_0 B_2$. Since $|T_F|=9$, then we get $|Y|=3$. The $3$-element nonisomorphic semilattices are $C_3$ and $V$. It is a matter of checking to see that
$$E(C_3\times_0 B_2)\cong U, E(V\times_0 B_2)\cong X\text{ and }E(T_F)\cong F.$$
Consequently there is no semilattice $Y$ such that $Y\times_0 B_2\cong T_F$.

\end{example}

\begin{center} Acknowledgement \end{center} 

The author is grateful to A. Nagy for some discussions and advices.

\end{document}